\newcommand{\mat}[4]{{\setlength{\arraycolsep}{0.5mm}\left[
\begin{array}{cc}#1&#2\\#3&#4\end{array}\right]}}
\newcommand{\forget}[1]{}
\newtheorem{lemma}{Lemma}[section]
\newtheorem{theorem}[lemma]{Theorem}
\newtheorem*{maintheorem}{Main Theorem}
\newtheorem*{theorem*}{Theorem ($\GL(2)$ twisting)}
\newcommand{\Q}{{\mathbb Q}}
\newcommand{\C}{{\mathbb C}}
\newcommand{\p}{\mathfrak p}
\newcommand{\OF}{{\mathfrak o}}
\newcommand{\GL}{{\rm GL}}
\newcommand{\SL}{{\rm SL}}
\newcommand{\GSp}{{\rm GSp}}
\newcommand{\K}[1]{{\rm K}(\p^{#1})}
\newcommand{\Kl}[1]{{\rm Kl}(\p^{#1})}
\newcommand{\vl}{{\rm vol}}
\newcommand{\diag}{{\rm diag}}
\begin{document}
 
\title{Twisting of paramodular vectors}
\author[Johnson-Leung and Roberts]{Jennifer Johnson-Leung\\
Brooks Roberts}
\begin{abstract}
Let $F$ be a non-archimedean local field of characteristic zero, let
$(\pi,V)$ be an irreducible, admissible representation of $\GSp(4,F)$
with trivial central character, and let $\chi$ be a  quadratic
character of $F^\times$ with conductor $c(\chi)>1$. We define a  twisting operator $T_\chi$ from
paramodular vectors for $\pi$ of level $n$ to paramodular vectors for 
$\chi \otimes \pi$ of level $\max(n+2c(\chi),4c(\chi))$, and prove that 
this operator has properties analogous to the well-known
$\GL(2)$ twisting operator. 
\end{abstract}
\maketitle

\section{Introduction}
Let $k$ and $M$ be positive integers, and let $\chi$ be a quadratic Dirichlet character mod $C$. If $f \in S_k(\Gamma_0(M))$ is a cusp form of weight $k$
with respect to $\Gamma_0(M)$ with Fourier expansion
$$
f(z) = \sum_{m=1}^\infty a(m) e^{2\pi im z},
$$
then the twist $f_\chi$ of $f$ by $\chi$ is the element of $S_k(\Gamma_0(MC^2))$ with Fourier expansion
$$
f_\chi (z) = \sum_{m=1}^\infty \chi(m) a(m) e^{2\pi im z}. 
$$
See, for example, Proposition 3.64 of \cite{S}.
In fact, twisting of cusp forms is a local operation when  cusp forms are identified as  automorphic forms on the adeles of $\GL(2)$ over $\Q$.  

Let $F$ be a nonarchimedean local field of characteristic zero with ring of integers $\OF$ and maximal ideal $\p$, let $(\pi,V)$ be a smooth representation of $\GL(2,F)$ for which the center of $\GL(2,F)$ acts trivially, and  let $\chi$ be a quadratic character of $F^\times$.  For $n$ a
non-negative integer, we let $V(n)$ and $V(n,\chi)$ be the spaces of $v \in V$ such that $\pi(k)v=v$ and $\pi(k)v = \chi(\det(k)) v$, respectively, for 
$k \in \Gamma_0(\p^n)$; here $\Gamma_0(\p^n)$ is the subgroup of $\GL(2,\OF)$ of elements which are upper triangular mod $\p^n$. For 
$v \in V$, define the $\chi$-twist $T_\chi (v)$ of $v$ as in \eqref{gl2twisingdefeq}. The main result about $\GL(2)$ twisting is summarized
by the following known theorem.  See section \ref{notationsec} for further definitions and section \ref{genus1sec} for a proof. 
\begin{theorem*}
Let $(\pi,V)$ be a smooth representation of $\GL(2,F)$ for which the center of $\GL(2,F)$ acts trivially, and 
let $\chi$ be a quadratic character of $F^\times$ with conductor $c(\chi) > 0$.  Let $n$ be a non-negative integer and 
define $N = \max(n,2 c(\chi))$. If $v \in V(n)$, then $T_\chi(v) \in V(N,\chi)$.
Moreover, assume that $\pi$ is generic, irreducible and admissible with Whittaker model $\mathcal{W}(\pi,\psi)$. Let $W \in V(n)$.
The $\chi$-twisted zeta integral \eqref{gl2twistedzetaeq} of $T_\chi(W)$ is
$$
Z(s,T_\chi(W),\chi) = (1-q^{-1})G(\chi,-c(\chi))W(1). 
$$
For $n \geq N_\pi$, the image of $T_\chi: V(n) \to V(N,\chi)$ is spanned by the non-zero vector $T_\chi( \beta'{}^{n-N_\pi} W_\pi)$,
where $W_\pi$ is a newform for $\pi$. 
\end{theorem*}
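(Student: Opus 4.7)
The plan is to prove the three claims in sequence; the first is where most of the work lies. For the invariance statement $T_\chi(v) \in V(N,\chi)$, I would use the Iwahori factorization of $\Gamma_0(\p^N)$ and verify the transformation law $\pi(k) T_\chi(v) = \chi(\det k) T_\chi(v)$ on upper-unipotent, diagonal, and lower-unipotent generators. The upper-unipotent case follows from a shift of the summation variable defining $T_\chi$; the diagonal case $k = \mat{a}{0}{0}{d}$ follows from the rescaling $x \mapsto a d^{-1} x$ combined with the fact that $\chi$ is quadratic, so $\chi(ad^{-1}) = \chi(ad)\chi(d^{-2}) = \chi(\det k)$. The main obstacle is the lower-unipotent case; for $u \in \p^N$ I would insert the matrix identity
\begin{equation*}
\begin{pmatrix} 1 & 0 \\ u & 1 \end{pmatrix}\begin{pmatrix} 1 & x \\ 0 & 1 \end{pmatrix} = \begin{pmatrix} 1 & x \\ 0 & 1 \end{pmatrix}\begin{pmatrix} 1-xu & -x^2 u \\ u & 1+ux \end{pmatrix}
\end{equation*}
into each term of $T_\chi(v)$. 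The hypothesis $N \geq 2c(\chi)$ ensures $xu \in \p$ and $x^2 u \in \OF$ whenever $x \in \p^{-c(\chi)}$, while $N \geq n$ places the right-hand factor in $\Gamma_0(\p^n)$; as it has determinant $1$, it fixes $v$, and the sum simply reproduces $T_\chi(v)$.

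For the zeta integral formula, I would substitute the definition of $T_\chi(W)$ and interchange the finite sum with the integral, giving
\begin{equation*}
Z(s, T_\chi(W), \chi) = \sum_{x \in \p^{-c(\chi)}/\OF} \chi(-x) \int_{F^\times} W\Bigl( \begin{pmatrix} a & 0 \\ 0 & 1 \end{pmatrix}\begin{pmatrix} 1 & x \\ 0 & 1 \end{pmatrix} \Bigr) \chi(a) |a|^{s-1/2}\, d^\times a.
\end{equation*}
The Whittaker transformation rule converts the integrand into $\psi(ax) \chi(a) |a|^{s-1/2} W\bigl(\begin{smallmatrix} a & 0 \\ 0 & 1 \end{smallmatrix}\bigr)$, so the $x$-sum reduces to a local Gauss sum. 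The conductor conditions on $\psi$ (trivial on $\OF$) and on $\chi$ pin down $\val(a) = -c(\chi)$, collapsing the $a$-integral to a single $\OF^\times$-coset of volume $(1-q^{-1})$. The $\Gamma_0(\p^n)$-invariance of $W$ together with triviality of the central character reduces its value on this locus to $W(1)$, producing the claimed expression $(1-q^{-1}) G(\chi, -c(\chi)) W(1)$.

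For the image assertion, I would first recall from classical newform theory for $\GL(2)$ that $\dim V(n) = n - N_\pi + 1$ for $n \geq N_\pi$, with a basis built from iterated level-raising operators applied to $W_\pi$. The zeta integral in Part 2 immediately gives $T_\chi(\beta'^{n-N_\pi} W_\pi) \neq 0$, since $W_\pi(1) \neq 0$ for the normalized newform and $G(\chi, -c(\chi)) \neq 0$. To establish that the image is exactly this one-dimensional line, I would verify a commutation relation between $T_\chi$ and the second level-raising operator $\beta$ via matrix identities analogous to Part 1, showing that $T_\chi \circ \beta$ and $T_\chi \circ \beta'$ land in the same line on $V(n-1)$, which collapses the basis. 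The main residual subtlety is the scalar bookkeeping in this commutation relation; an alternative route is to invoke the conductor formula for $\chi \otimes \pi$ together with uniqueness of the newform at level $N$ to bound $\dim V(N,\chi)$ from above.
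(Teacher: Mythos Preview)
Your approach to the first two assertions matches the paper's: verify the $\Gamma_0(\p^N)$-transformation on generators via matrix identities, then compute the zeta integral by unfolding the definition and invoking the Gauss-sum lemma. Your matrix identity for the lower-unipotent case is in fact slightly cleaner than the paper's, since the same upper-unipotent reappears on the left and no change of variables in the integration is required.

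There is, however, a slip in your zeta-integral computation. After the Whittaker transformation the inner integral becomes $\int_{\OF^\times}\chi(b)\psi(ab\varpi^{-c(\chi)})\,db$, which by the Gauss-sum lemma is nonzero exactly when $\val(a)=0$, not $\val(a)=-c(\chi)$. With $a\in\OF^\times$ the reduction to $W(1)$ is immediate from $\diag(a,1)\in\Gamma_0(\p^n)$; no appeal to the central character is needed, and your invoking it suggests this is a genuine confusion rather than a typo.

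For the image assertion the paper gives no explicit argument, so there is nothing to compare against. Your primary route can be made to work: the key observation is that $T_\chi\circ\beta=0$ on $V(n-1)$, since after commuting $\diag(1,\varpi)$ past the upper-unipotent the integrand $\pi\bigl(\left[\begin{smallmatrix}1&b\varpi^{1-c(\chi)}\\&1\end{smallmatrix}\right]\bigr)v$ depends only on $b$ modulo $\p^{c(\chi)-1}$, and averaging $\chi(b)$ over each such additive coset gives zero. This kills every basis vector $\beta'^i\beta^jW_\pi$ with $j>0$, leaving only $T_\chi(\beta'^{n-N_\pi}W_\pi)$. Your alternative route, however, does not work as stated: $V(N,\chi)\cong V_{\chi\otimes\pi}(N)$ has dimension $N-N_{\chi\otimes\pi}+1$, which exceeds $1$ whenever $N>N_{\chi\otimes\pi}$, so uniqueness of the newform does not by itself bound the image.
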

The goal of this paper is to construct an analog of quadratic twisting for paramodular vectors in representations of $\GSp(4,F)$ 
with trivial central character.  Let $(\pi,V)$ be a smooth representation of $\GSp(4,F)$ for which the center of $\GSp(4,F)$
acts trivially. Let $V(n)$ and $V(n,\chi)$ be the spaces of $v \in V$ such that $\pi(k)v =v$ and $\pi(k)v = \chi(\lambda (k)) v$, respectively, 
for $k$ in the paramodular subgroup $\K{n}$ of $\GSp(4,F)$ of level $\p^n$. For $v \in V$, we define the $\chi$-twist $T_\chi(v)$
of $v$ as in  \eqref{twisteq}. Our main result is the following theorem. We refer to section \ref{notationsec} for more definitions
and section \ref{genus2sec} for the proof. 

\begin{maintheorem}
Let $(\pi,V)$ be a smooth representation of $\GSp(4,F)$ for which the center of $\GSp(4,F)$ acts trivially, and let $\chi$ be a quadratic character of $F^\times$ with conductor $c(\chi) >0$.  Let $n$ be a non-negative integer and define 
$
N=  \max(n+2c(\chi),4c(\chi)).
$
 If $v \in V (n)$, then $T_\chi(v) \in V(N,\chi)$. 
Moreover, assume that $\pi$ is generic, irreducible and admissible with Whittaker model $\mathcal{W}(\pi,\psi_{c_1,c_2})$ where $c_1,c_2 \in \OF^\times$. If $W \in  V (n)$, then the $\chi$-twisted zeta integral \eqref{twistedzetaeq} of $T_\chi(W)$ is
$$
Z(s, T_\chi(W),\chi) = (q-1)  q^{c(\chi)} \chi(c_2)G(\chi,-c(\chi))^3 W(1 ).
$$
For $n \geq N_\pi$, the image of $T_\chi: V(n) \to V(N,\chi)$ is spanned by the non-zero vector $T_\chi( \theta'{}^{n-N_\pi} W_\pi)$, 
where $W_\pi$ is a newform for $\pi$. 
\end{maintheorem}

In another work we will consider the application of the paramodular twisting operator $T_\chi$ to Siegel modular forms and the resulting Fourier coefficients. 
One reason that Siegel paramodular forms of degree 2  are of interest is their conjectural connection to abelian surfaces over $\Q$. This is discussed in \cite{BK}; see also \cite{PY}. 

We note that the integer $N$ in the Main Theorem is optimal in the following sense. We may identify the space $V(n,\chi)$ with the space $V_{\chi \otimes \pi}(n)$ of $\K{n}$ fixed vectors in the twisted representation $\chi \otimes \pi$.  Then there exist generic, irreducible, and admissible representations $\pi$ such that $N = N_{\chi \otimes \pi}$. For example, if $\pi$ is a type I representation $\chi_1 \times \chi_2 \rtimes \sigma$ with $\chi_1$, $\chi_2$ and $\sigma$ unramified, then $N_{\chi \otimes \pi} = 4 c(\chi)=\max(0+2c(\chi),4c(\chi))$. Further, suppose that $\pi$ is a type X representation $\pi_1 \rtimes \sigma$ with $\pi_1$ having trivial central character, $\sigma$ unramified, and $2c(\chi)<N_{\pi_1}$. Then $N_{\chi \otimes \pi} = N_{\pi} + 2c(\chi) = \max(N_{\pi}+2c(\chi), 4c(\chi))$. It is interesting to observe, as in this last example, that $N_{\chi \otimes \pi} > N_\pi$ no matter how large $N_\pi$ is.

\section{Notation and preliminaries} 
\label{notationsec}
In this paper $F$ is a nonarchimedean local field of characteristic zero, with ring of integers $\OF$ and generator $\varpi$ of the maximal ideal $\p$ of $\OF$. We fix a non-trivial continuous character $\psi$ of $(F,+)$ such that $\psi(\OF)=1$ but $\psi(\p^{-1}) \neq 1$.  We let $q$ be the number of elements of $\OF/\p$ and use the absolute value on $F$ such that $|\varpi | = q^{-1}$. We use the Haar measure on the additive group $F$ that assigns $\OF$ measure $1$ and the Haar measure on the multiplicative group $F^\times$ that assigns $\OF^\times$  measure $1-q^{-1}$. 
 Throughout the paper $\chi$ is a quadratic character of $F^\times$ with conductor $c(\chi)$, i.e., $c(\chi)$ is the smallest non-negative integer $n$ such that $\chi(1+\p^n)=1$, where we take $1+\p^0=\OF^\times$. 

If $n$ is a non-negative integer, then we let $\Gamma_0(\p^n)$ be the subgroup of $\GL(2,\OF)$ of elements which are upper triangular mod $\p^n$; we will also write $\Gamma_0(\p^n)$ for the analogous subgroup of $\SL(2,\OF)$ when there is no risk of confusion. Let $(\pi,V)$ be a smooth representation of $\GL(2,F)$ for which the center of $\GL(2,F)$ acts trivially, and let $n$ be a non-negative integer. The subspace $V(n)$ consists of the vectors in $V$ fixed by $\Gamma_0(\p^n)$ and  $V(n,\chi)$ is the subspace of vectors $v \in V$ such that $\pi(k) v = \chi(\det (k)) v$ for $k \in \Gamma_0(\p^n)$. We define the level raising operators $\beta,\beta':V(n)\to V(n+1)$ and $\beta,\beta':V(n,\chi)\to V(n+1,\chi)$ by $\beta(v) = \pi(\left[\begin{smallmatrix} 1 & \\ & \varpi \end{smallmatrix} \right]) v$ and $\beta' v = v$.  If $\pi$ is generic, irreducible and admissible, then $V(n)$ is non-zero for some $n$; we let $N_\pi$ be the smallest such $n$. The space $V(N_\pi)$ is one-dimensional; if $W_\pi$ is a non-zero element of $V(N_\pi)$ so that $V(N_\pi) = \C \cdot W_\pi$, then we refer to $W_\pi$ as a \emph{newform}.
The space $V(n)$ for $n \geq N_\pi$ is spanned by the vectors $\beta'{}^i\beta{}^j W_\pi$ where $i$ and $j$ are non-negative integers with $i+j = n-N_\pi$. 
 If $W_\pi$ is viewed as an element of the Whittaker model $\mathcal{W}(\pi,\psi)$ of $\pi$, then $W_\pi(1) \neq 0$. As usual, the elements $W$ of $\mathcal{W}(\pi,\psi)$ satisfy $W(\left[\begin{smallmatrix}1&x\\&1\end{smallmatrix}\right]g)=\psi(x)W(g)$ for $x\in F$ and $g\in\GL(2,F)$. See \cite{C} and \cite{D}.

The theory of paramodular newforms is developed in \cite{RS}, and we will use the notation of
 \cite{RS} concerning $\GSp(4,F)$. We recall some necessary definitions and results.
In particular, $\GSp(4,F)$ is the subgroup of $g \in \GL(4,F)$ such that
$$
{}^t g \begin{bmatrix} &&&1 \\ &&1& \\ &-1&& \\ -1&&& \end{bmatrix} g = \lambda(g) \begin{bmatrix} &&&1 \\ &&1& \\ &-1&& \\ -1&&& \end{bmatrix} 
$$
for some $\lambda(g) \in F^\times$. If $n$ is a non-negative integer, we let $\Kl{n}$ (respectively $\K{n}$) be the subgroup of $k \in \GSp(4,F)$ such that $\lambda (k) \in \OF^\times$ and 
$$
k \in 
\begin{bmatrix}
\OF & \OF & \OF & \OF\\
\p^n&\OF & \OF & \OF \\
\p^n & \OF & \OF & \OF \\
\p^n & \p^n & \p^n & \OF
\end{bmatrix}
\quad 
(\text{resp.} \quad 
k \in 
\begin{bmatrix}
\OF & \OF & \OF & \p^{-n}\\
\p^n&\OF & \OF & \OF \\
\p^n & \OF & \OF & \OF \\
\p^n & \p^n & \p^n & \OF
\end{bmatrix}).
$$
The group $\Kl{n}$ is called the Klingen congruence subgroup of level $\p^n$ and $\K{n}$ is called the paramodular subgroup of level $\p^n$.
 For $a,b,c,d \in F^\times$, we set 
$$
\diag(a,b,c,d) = \begin{bmatrix} a &&& \\ &b&& \\ &&c& \\ &&&d \end{bmatrix}.
$$
This element is in $\GSp(4,F)$ if and only if $ad=bc$. 
Let $(\pi,V)$ be a smooth representation of $\GSp(4,F)$ such that the center of $\GSp(4,F)$ acts trivially. If $n$ is a non-negative integer, then $V_{\mathrm{Kl}}(n)$ and $V(n)$ are  the subspaces of vectors fixed by the Klingen congruence subgroup $\Kl{n}$, and paramodular subgroup $\K{n}$, respectively; additionally,  we let $V_{\mathrm{Kl}}(n,\chi)$ and $V(n,\chi)$  be the subspaces of vectors $v$ in $V$ such that $\pi(k) v = \chi(\lambda (k)) v$ for $k \in \Kl{n}$ and $k \in \K{n}$, respectively. 
Also, we define 
\begin{equation}
\label{specialmateq}
\eta=
\begin{bmatrix}
\varpi^{-1}&&&\\
&1&& \\
&&1& \\
&&&\varpi
\end{bmatrix},
\quad
\tau =
 \begin{bmatrix}
1&&&\\
&\varpi^{-1} && \\
&&\varpi & \\
&&&1
\end{bmatrix} , \quad
t_n = 
\begin{bmatrix}
&&&-\varpi^{-n}\\
&1&& \\
&&1& \\
\varpi^n &&& 
\end{bmatrix}.
\end{equation} 
Sometimes we will  write $\eta$ and $\tau$ for $\pi(\eta)$ and $\pi(\tau)$, respectively. 
We define the level raising operators $\eta: V(n) \to V(n+2)$ and $\theta,\theta':V(n)\to V(n+1)$ as in \cite{RS}.
Let $(\pi,V)$ be an irreducible, admissible representation of $\GSp(4,F)$ with trivial central character. If $V(n)$ is non-zero
for some non-negative integer $n$ then we say that $\pi$ is \emph{paramodular} and let $N_\pi$ be the smallest such integer. It is known that if $\pi$ is paramodular, then
$V(N_\pi)$ is one-dimensional; if $W_\pi$ is a non-zero element of $V(N_\pi)$ so that $V(N_\pi) = \C \cdot W_\pi$, then we refer to $W_\pi$ as a \emph{newform}.
The space $V(n)$ for $n \geq N_\pi$ is spanned by the vectors $\theta'{}^i \theta^j \eta^k W_\pi$  where $i,j$ and $k$ are non-negative
integers with $i+j+2k = n-N_\pi$. 
It is known that if $\pi$ is generic, then $\pi$ is paramodular; in general, all paramodular, irreducible, admissible representations of $\GSp(4,F)$ with trivial central character have been classified. 
If $\pi$ is a generic, irreducible, admissible representation of $\GSp(4,F)$ with trivial central character then we let $\mathcal{W}(\pi,\psi_{c_1,c_2})$ be the Whittaker model of $\pi$ with respect to the character $\psi_{c_1,c_2}$ of the unipotent radical of the Borel subgroup of $\GSp(4,F)$ with $c_1,c_2 \in \OF^\times$. If $W_\pi$ is viewed as an element of the Whittaker model $\mathcal{W}(\pi,\psi_{c_1,c_2})$ of $\pi$, then $W_\pi(1) \neq 0$. 

We will refer to the following basic lemma. 
\begin{lemma}
\label{changelemma}
Let $X$ be a complex vector space.
Let $f: \OF^\times \to X$ be a locally constant function. Let $b \in \OF$ and let $t$ be a positive integer. We have
$$
\int\limits_{\OF^\times} f(u(1+bu^{-1} \varpi^t)) \, d u = \int\limits_{\OF^\times} f(u) \, d u. 
$$
\end{lemma}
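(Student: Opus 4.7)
The plan is straightforward: simplify the argument of $f$, then identify the change of variables as an additive translation, and use the fact that on $\OF^\times$ the multiplicative Haar measure coincides with the additive Haar measure.

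First I would observe the algebraic identity $u(1+bu^{-1}\varpi^t)=u+b\varpi^t$, so that the integral in question becomes
$$
\int_{\OF^\times} f(u+b\varpi^t)\,du.
$$
Since $b\varpi^t\in\p$ (as $t\geq 1$ and $b\in\OF$), for every $u\in\OF^\times$ the element $u+b\varpi^t$ has the same valuation as $u$, namely $0$, and hence lies in $\OF^\times$. The same reasoning applied to the inverse translation $u\mapsto u-b\varpi^t$ shows that $u\mapsto u+b\varpi^t$ is a bijection from $\OF^\times$ to itself.

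Next I would invoke the normalization conventions fixed in Section \ref{notationsec}: the additive Haar measure assigns $\OF$ measure $1$, so $\OF^\times=\OF\setminus\p$ has additive measure $1-q^{-1}$, which agrees with the multiplicative Haar measure of $\OF^\times$. Since $|u|=1$ on $\OF^\times$, the identity $d^\times u=|u|^{-1}\,du=du$ holds on this set, so the multiplicative integral over $\OF^\times$ may be computed as an additive integral.

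Finally, by translation invariance of the additive Haar measure on $(F,+)$ and the bijectivity established above,
$$
\int_{\OF^\times} f(u+b\varpi^t)\,du=\int_{\OF^\times+b\varpi^t} f(v)\,dv=\int_{\OF^\times} f(v)\,dv,
$$
which gives the claim. There is no real obstacle here; the only subtlety to check carefully is that the translation by an element of $\p$ does not push $\OF^\times$ outside itself, which is exactly why the hypothesis $t\geq 1$ (and not $t\geq 0$) is needed.
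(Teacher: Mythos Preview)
Your argument is correct. The key observation that $u(1+bu^{-1}\varpi^t)=u+b\varpi^t$ reduces everything to an additive translation by an element of $\p$, which preserves $\OF^\times$ and the (additive) Haar measure; this is clean and complete.

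The paper takes a slightly different route: rather than simplify the argument and invoke translation invariance, it exploits the local constancy of $f$ to rewrite both integrals as finite sums over $\OF^\times/(1+\p^n)$, and then checks directly that $u\mapsto u(1+bu^{-1}\varpi^t)$ induces a bijection of that finite quotient. Your approach is more conceptual and avoids the discretization step entirely; the paper's approach, on the other hand, makes no appeal to the relationship between the additive and multiplicative measures on $\OF^\times$ (which, as you note, is a non-issue here since $|u|=1$). Both arguments are short, and yours is arguably the more transparent one once the algebraic simplification is spotted.
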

\begin{proof} 
Let $n$ be a positive integer such that $f(x+\p^n) =f(x)$ for $x \in \OF^\times$. We have
\begin{align*}
\int\limits_{\OF^\times} f(u(1+bu^{-1} \varpi^t)) \, d u
&=
q^{-n} \sum_{u \in \OF^\times/(1+\p^n)}  f(u(1+bu^{-1}\varpi^t)). 
\end{align*}
and similarly
$$
\int\limits_{\OF^\times} f(u) \, d u= q^{-n} \sum_{u\in \OF^\times/(1+\p^n)}  f(u). 
$$
The lemma now follows from the fact that the function $\OF^\times/(1+\p^n) \to \OF^\times/(1+\p^n)$ defined by $u \mapsto u(1+bu^{-1}\varpi^t)$ is a well-defined bijection. 
\end{proof}
The following lemma about Gauss sums is well-known. 
\begin{lemma}
\label{gausslemma}
Let $\chi$ be a character of $\OF^\times$ with conductor $c(\chi)$, and let $k$ be an integer. Define
$$
G(\chi,k) = \int\limits_{\OF^\times} \chi(u)\psi(u \varpi^k)\, du.
$$
If $\chi$ is ramified, then $G(\chi,k)$ is non-zero if and only if $k=-c(\chi)$. 
\end{lemma}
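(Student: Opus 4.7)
The plan is to analyze $G(\chi,k)$ by partitioning $\OF^\times$ into the cosets of $1+\p^{c(\chi)}$, on which $\chi$ is constant, and then to handle the three regimes $k<-c(\chi)$, $k>-c(\chi)$, and $k=-c(\chi)$ separately. Writing $c=c(\chi)$ and decomposing $\OF^\times=\bigsqcup_a a(1+\p^c)$, we have
$$
G(\chi,k)=\sum_{a}\chi(a)\int\limits_{1+\p^c}\psi(ay\varpi^k)\,dy,
$$
and the substitution $y=1+\varpi^c z$ with $z\in\OF$ turns the inner integral into $q^{-c}\psi(a\varpi^k)\int_{\OF}\psi(az\varpi^{c+k})\,dz$. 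This integral of an additive character on $\OF$ is nonzero precisely when $c+k\geq 0$. That immediately kills $G(\chi,k)$ for $k<-c$.

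For $k>-c$, what remains is the finite sum $G(\chi,k)=q^{-c}\sum_a\chi(a)\psi(a\varpi^k)$, and the plan is to translate $a\mapsto ay_0$ for a suitable $y_0$. One needs $\psi(ay_0\varpi^k)=\psi(a\varpi^k)$, which holds exactly when $y_0\in 1+\p^{-k}$; since $-k\leq c-1$ we have $1+\p^{c-1}\subseteq 1+\p^{-k}$, and by the minimality of $c=c(\chi)$ the character $\chi$ is nontrivial on $1+\p^{c-1}$. Choosing $y_0$ in this intersection with $\chi(y_0)\neq 1$, the change of variables yields $G(\chi,k)=\chi(y_0)G(\chi,k)$, forcing $G(\chi,k)=0$.

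The substantive case is $k=-c$, where nonvanishing of
$$
G(\chi,-c)=q^{-c}\sum_a\chi(a)\psi(a\varpi^{-c})
$$
will be proved by computing $|G(\chi,-c)|^2$. Writing this as a double integral on $(\OF^\times)^2$ and substituting $a=bt$ gives
$$
|G(\chi,-c)|^2=\int\limits_{\OF^\times}\chi(t)\Bigl(\int\limits_{\OF^\times}\psi(b(t-1)\varpi^{-c})\,db\Bigr)dt.
$$
The inner $\psi$-integral is a standard $\OF^\times$-integral of an additive character: it equals $1-q^{-1}$ when $\val(t-1)\geq c$, equals $-q^{-1}$ when $\val(t-1)=c-1$, and vanishes otherwise. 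Combining this with $\int_{1+\p^{c-1}}\chi(t)\,dt=0$, which again follows from the nontriviality of $\chi$ on $1+\p^{c-1}$, and with $\vl(1+\p^j)=q^{-j}$, the expression collapses to $|G(\chi,-c)|^2=q^{-c}>0$.

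The main obstacle is the last step: the inner $\psi$-integral must be tracked carefully across the three valuation ranges, and the $\chi$-integrals over the nested subgroups $1+\p^{c-1}\supset 1+\p^c$ must be evaluated using the minimality property of the conductor. The rest of the argument is a straightforward bookkeeping exercise with the coset decomposition.
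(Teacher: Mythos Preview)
Your proof is correct. The paper itself provides no argument for this lemma, simply labeling it ``well-known'' and moving on, so there is nothing to compare your approach against. The route you take is the standard one: the coset decomposition modulo $1+\p^{c(\chi)}$ reduces the integral to a finite character sum, the additive Fourier analysis on $\OF$ disposes of $k<-c(\chi)$, the multiplicative translation by an element of $1+\p^{c(\chi)-1}$ on which $\chi$ is nontrivial disposes of $k>-c(\chi)$, and the double-integral computation of $|G(\chi,-c(\chi))|^2=q^{-c(\chi)}$ via the substitution $a=bt$ handles the remaining case. One small remark: when $k\geq 0$ the condition ``$y_0\in 1+\p^{-k}$'' is a bit awkward notationally (since $\p^{-k}\supseteq\OF$), but the inclusion $\p^{c(\chi)-1}\subseteq\p^{-k}$ that you need holds for every integer $k>-c(\chi)$, so the argument goes through uniformly.
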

\section{Twist in genus 1}
\label{genus1sec}
Let $(\pi,V)$ be a smooth representation of $\GL(2,F)$ for which the center of $\GL(2,F)$ acts trivially,
let $\chi$ be a quadratic character of $\OF^\times$ with conductor $c(\chi)$, and let $n$ be a non-negative integer. For $v \in V(n)$ we define
\begin{equation}
\label{gl2twisingdefeq}
T_\chi(v) = \int\limits_{\OF^\times} \chi(b) \pi(\begin{bmatrix} 1 & b\varpi^{-c(\chi)}\\ & 1 \end{bmatrix}) v\, db.
\end{equation}
If $\chi$ is unramified, then $T_\chi(v) = (1-q^{-1}) v$ for $v \in V(n)$. Thus, we will usually assume that $\chi$ is ramified. 
Assume further that $\pi$ is generic, irreducible and admissible with Whittaker model $\mathcal{W}(\pi,\psi)$. For $W \in \mathcal{W}(\pi,\psi)$
we define the $\chi$-twisted zeta integral of $W$ as
\begin{equation}
\label{gl2twistedzetaeq}
Z(s,W,\chi) = \int\limits_{F^\times} W(\begin{bmatrix} t & \\ & 1 \end{bmatrix}) |t|^{s-1/2} \chi(t)\, d^\times t. 
\end{equation}
\begin{theorem}
\label{gl2twisttheorem}
Let $(\pi,V)$ be a smooth representation of $\GL(2,F)$ for which the center of $\GL(2,F)$ acts trivially,
let $\chi$ be a quadratic character of $\OF^\times$ with conductor $c(\chi) > 0$, and let $n$ be a non-negative integer.
Let $N = \max(n,2 c(\chi))$. If $v \in V(n)$, then $\pi(k) T_\chi(v) = \chi (\det (k)) T_\chi (v)$ for $k \in \Gamma_0(\p^N)$.
Moreover, assume that $\pi$ is generic, irreducible and admissible with Whittaker model $\mathcal{W}(\pi,\psi)$. Let $W \in V(n)$.
The $\chi$-twisted zeta integral of $T_\chi(W)$ is
$$
Z(s,T_\chi(W),\chi) = (1-q^{-1})G(\chi,-c(\chi))W(1). 
$$
For $n \geq N_\pi$, the image of $T_\chi: V(n) \to V(N,\chi)$ is spanned by the non-zero vector $T_\chi( \beta'{}^{n-N_\pi} W_\pi)$. 
\end{theorem}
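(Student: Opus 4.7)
For the first assertion, I would verify $\pi(k)T_\chi(v)=\chi(\det k)T_\chi(v)$ on generators of $\Gamma_0(\p^N)$. Upper unipotents $\left[\begin{smallmatrix}1 & x \\ & 1\end{smallmatrix}\right]$ with $x\in\OF$ and diagonal matrices in $\GL(2,\OF)$ reduce to substitutions in the defining integral that use $\chi$ trivial on $1+\p^{c(\chi)}$ (to absorb $b\mapsto b+x\varpi^{c(\chi)}$) and $\chi$ quadratic (so $\chi(d/a)=\chi(ad)$). For a lower unipotent $\left[\begin{smallmatrix}1 & \\ y\varpi^N & 1\end{smallmatrix}\right]$, I would use the factorization
\[
\begin{bmatrix}1 & \\ y\varpi^N & 1\end{bmatrix}\begin{bmatrix}1 & b\varpi^{-c(\chi)} \\ & 1\end{bmatrix}
=\begin{bmatrix}1 & b\varpi^{-c(\chi)}u^{-1} \\ & 1\end{bmatrix}\begin{bmatrix}u^{-1} & \\ & u\end{bmatrix}\begin{bmatrix}1 & \\ y\varpi^N u^{-1} & 1\end{bmatrix}
\]
with $u=1+by\varpi^{N-c(\chi)}$. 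The hypothesis $N\geq 2c(\chi)$ forces $u\in 1+\p^{c(\chi)}$, so $\chi(u)=1$ and the diagonal and lower unipotent on the right lie in $\Gamma_0(\p^n)$ and fix $v$; the residual change of variables $b\mapsto bu^{-1}$ on $\OF^\times$ preserves the integral (formalized by Lemma~\ref{changelemma}).

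For the zeta integral, moving the upper unipotent past $\left[\begin{smallmatrix}t & \\ & 1\end{smallmatrix}\right]$ via the Whittaker equivariance gives
\[
T_\chi(W)\!\left(\begin{bmatrix}t & \\ & 1\end{bmatrix}\right)=W\!\left(\begin{bmatrix}t & \\ & 1\end{bmatrix}\right)\int_{\OF^\times}\chi(b)\,\psi(tb\varpi^{-c(\chi)})\,db.
\]
Writing $t=u\varpi^k$ and substituting $b\mapsto b/u$ (permitted since $\chi^2=1$), the inner integral becomes $\chi(u)\,G(\chi,k-c(\chi))$, which by Lemma~\ref{gausslemma} is nonzero only when $k=0$. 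For $t\in\OF^\times$, triviality of the central character and $\Gamma_0(\p^n)$-invariance yield $W\!\left(\left[\begin{smallmatrix}t & \\ & 1\end{smallmatrix}\right]\right)=W(1)$, and $\int_{\OF^\times}\chi(u)^2\,d^\times u=1-q^{-1}$, producing the stated value.

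For the last assertion, Part~2 immediately gives $T_\chi(W_\pi)\neq 0$, and this coincides with $T_\chi(\beta'{}^{n-N_\pi}W_\pi)$ since $\beta'$ acts as the identity on $V$. Since $V(n)$ is spanned by $\{\beta^j W_\pi\}_{0\leq j\leq n-N_\pi}$, the image is one-dimensional once I prove $T_\chi(\beta v)=0$ for every $v$ in every $V(m)$. The commutation $\left[\begin{smallmatrix}1 & b\varpi^{-c(\chi)} \\ & 1\end{smallmatrix}\right]\!\left[\begin{smallmatrix}1 & \\ & \varpi\end{smallmatrix}\right]=\left[\begin{smallmatrix}1 & \\ & \varpi\end{smallmatrix}\right]\!\left[\begin{smallmatrix}1 & b\varpi^{1-c(\chi)} \\ & 1\end{smallmatrix}\right]$ gives
\[
T_\chi(\beta v)=\beta\int_{\OF^\times}\chi(b)\,\pi\!\left(\begin{bmatrix}1 & b\varpi^{1-c(\chi)} \\ & 1\end{bmatrix}\right)v\,db,
\]
and I would partition $\OF^\times=\bigsqcup_{b_0}(b_0+\p^{c(\chi)-1})$ over representatives of $\OF^\times/(1+\p^{c(\chi)-1})$. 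For $b=b_0+b_1$ with $b_1\in\p^{c(\chi)-1}$, the perturbation $b_1\varpi^{1-c(\chi)}\in\OF$, so $v$ is fixed by the corresponding upper unipotent and the integral factors with a residual character integral $\int_{1+\p^{c(\chi)-1}}\chi(w)\,dw$, which vanishes because $\chi$ is non-trivial on $1+\p^{c(\chi)-1}$ (the case $c(\chi)=1$ reducing immediately to $\int_{\OF^\times}\chi(b)\,db=0$). Iterating on $v=\beta^{j-1}W_\pi$ gives $T_\chi(\beta^j W_\pi)=0$ for $j\geq 1$, so the image is $\C\cdot T_\chi(W_\pi)$. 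The main obstacle is locating this coset decomposition: $\p^{c(\chi)-1}$ is simultaneously the largest ideal along which $v$ absorbs the unipotent translation (after the $\varpi^{1-c(\chi)}$ rescaling) and the smallest along which $\chi$ remains non-trivial, so character orthogonality matches the representation-theoretic invariance precisely to force the vanishing.
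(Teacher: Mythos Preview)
Your argument is correct. For the transformation property under $\Gamma_0(\p^N)$ and for the zeta integral computation you follow essentially the same route as the paper: check on generators using the same matrix factorization and Lemma~\ref{changelemma} for the lower unipotent, and use Lemma~\ref{gausslemma} to localize the zeta integral to $t\in\OF^\times$.

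The one point of genuine difference is the final assertion about the image of $T_\chi$. The paper's proof simply stops after the zeta integral computation and does not argue this part at all. You supply a clean direct argument: commute $\beta$ past the unipotent in the definition of $T_\chi$, observe that the resulting integrand $\pi\bigl(\bigl[\begin{smallmatrix}1 & b\varpi^{1-c(\chi)}\\ & 1\end{smallmatrix}\bigr]\bigr)v$ is constant on cosets $b_0(1+\p^{c(\chi)-1})$, and kill each coset integral by nontriviality of $\chi$ on $1+\p^{c(\chi)-1}$. This shows $T_\chi\circ\beta=0$ on every $V(m)$, whence the image is $\C\cdot T_\chi(W_\pi)$. (No iteration is actually needed: once $T_\chi(\beta v)=0$ for all $v\in V(m)$, taking $v=\beta^{j-1}W_\pi$ gives the conclusion in one step.) This is the natural elementary argument; presumably the paper regards the $\GL(2)$ case as sufficiently well known to omit it.
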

\begin{proof}
The group $\Gamma_0(\p^N)$ is generated by the elements contained in the sets
$$
\begin{bmatrix}
\OF^\times & \\ & \OF^\times 
\end{bmatrix}, \quad
\begin{bmatrix}
1& \OF \\
& 1 
\end{bmatrix}, \quad
\begin{bmatrix}
1& \\ 
\p^N&1 
\end{bmatrix}.
$$
It is easy to verify that $\pi(k)T_\chi(v) = \chi(\det(k))T_\chi(v)$ for generators $k$ of the first two types. 
Let $y \in \p^N$. Noting that $N-c(\chi) \geq c(\chi)>0$ and $N \geq n$, we have
\begin{align*}&\pi(\mat{1}{}{y}{1})
\int\limits_{\OF^\times} \chi(b)  \pi(\mat{1}{b\varpi^{-c(\chi)}}{}{1})v\, db\\
&=\int\limits_{\OF^\times} \chi(b)
\pi(\mat{(1+\varpi^{-c(\chi)}yb)^{-1}}{b\varpi^{-c(\chi)}}{}{1+\varpi^{-c(\chi)}y b})
\pi(\mat{1}{}{(1+\varpi^{-c(\chi)}yb)^{-1} y }{1})v\, db\\
&= \int\limits_{\OF^\times} \chi(b)
\pi(\mat{1}{(1+\varpi^{-c(\chi)}yb)^{-1}b\varpi^{-c(\chi)}}{}{1})
\pi(\mat{(1+\varpi^{-c(\chi)}yb)^{-1}}{}{}{1+\varpi^{-c(\chi)}y b})
v\, db\\
&= \int\limits_{\OF^\times} \chi(b)
\pi(\mat{1}{(1+\varpi^{-c(\chi)}yb)^{-1}b\varpi^{-c(\chi)}}{}{1})
v\, db\\
&= \int\limits_{\OF^\times} \chi((1+\varpi^{-c(\chi)}yb)^{-1}b)
\pi(\mat{1}{(1+\varpi^{-c(\chi)}yb)^{-1}b\varpi^{-c(\chi)}}{}{1})
v\, db\\
&= \int\limits_{\OF^\times} \chi(b)
\pi(\mat{1}{b\varpi^{-c(\chi)}}{}{1})
v\, db\\
&=T_{\chi}(v). 
\end{align*}
For the penultimate equality we applied Lemma \ref{changelemma}. 
Assume now that $\pi$ is generic, irreducible and admissible as in the statement of the theorem. Then:
\begin{align*}
Z (s,  T_{\chi} (W),\chi)
&=\int\limits_{F^\times}  T_{\chi}(W)(\mat{t}{}{}{1})|t|^{s-1/2}\chi(t)\, d^\times t\\
&=\int\limits_{F^\times} 
\int\limits_{\OF^\times} \chi(b) 
W(\mat{t}{}{}{1}\mat{1}{b\varpi^{-c(\chi)}}{}{1}) |t|^{s-1/2}\chi(t)\,  db\, d^\times t\\
&=\int\limits_{F^\times} 
(\int\limits_{\OF^\times} \chi(b) \psi(tb\varpi^{-c(\chi)}) \, db )
W(\mat{t}{}{}{1}) |t|^{s-1/2}\chi(t)\,  d^\times t\\
&=\int\limits_{\OF^\times} 
(\int\limits_{\OF^\times} \chi(b) \psi(tb\varpi^{-c(\chi)}) \, db )
W(\mat{t}{}{}{1}) \chi(t)\,  d^\times t\\
&=\int\limits_{\OF^\times} \chi(t) G(\chi,-c(\chi))
W(\mat{t}{}{}{1}) \chi(t)\,  d^\times t\\
&=(1-q^{-1})  G(\chi,-c(\chi))W(1). 
\end{align*}
Here, we have used Lemma \ref{gausslemma}. 
\end{proof}

\section{Twist in genus 2}
\label{genus2sec}
Let $(\pi, V)$ be a smooth representation of $\GSp(4,F)$ for which the center of $\GSp(4,F)$ acts trivially. Let $\chi$ be a quadratic character.  For $v \in V$ we define
\begin{equation}
\label{vprimedefalteq}
v^\chi = \int\limits_{\OF^\times} \int\limits_{\OF^\times} \int\limits_{\p^{-2c(\chi)}}  \chi(ab) 
\pi(
\begin{bmatrix} 1 & -a\varpi^{-c(\chi)} & b \varpi^{-2c(\chi)} & z\\ &1&&b\varpi^{-2c(\chi)} \\ &&1&a\varpi^{-c(\chi)} \\ &&&1 \end{bmatrix}
)\tau^{c(\chi)} v\,dz\, d a \, d b. 
\end{equation}
Evidently, if $\chi$ is unramified, then $v^\chi = (1-q^{-1})^2 v$. 
\begin{lemma}
\label{vprimelemma} 
Let $(\pi, V)$ be a smooth representation of $\GSp(4,F)$ for which the center of $\GSp(4,F)$ acts trivially. Let $\chi$ be a quadratic character with $c(\chi)>0$. Let $n$ be a non-negative integer. Let $v \in V_{\mathrm{Kl}}(n)$. We have
$
\pi(k)v^\chi =\chi(\lambda (k))  v^\chi
$
for the subgroup of $k \in \GSp(4,F)$ such that $\lambda (k) \in \OF^\times$ and 
$$
k \in 
\begin{bmatrix} 
\OF & \OF & \OF & \p^{-2c(\chi)} \\
&\OF&\OF&\OF\\
&\p^{2c(\chi)}&\OF&\OF\\
&&&\OF
\end{bmatrix}. 
$$ 
\end{lemma}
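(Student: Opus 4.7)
The plan is to verify the transformation $\pi(k)v^\chi=\chi(\lambda(k))v^\chi$ on a generating set of the subgroup $H$ in the statement. A convenient such set consists of the diagonal torus $\diag(\alpha,\beta,\gamma,\delta)$ with $\alpha,\beta,\gamma,\delta\in\OF^\times$ and $\alpha\delta=\beta\gamma$; the Borel root elements $n_{12}(c)=I+c(E_{12}-E_{34})$, $n_{13}(c)=I+c(E_{13}+E_{24})$ and $n_{23}(c)=I+cE_{23}$ with $c\in\OF$; the $(1,4)$-entry element $n_{14}(z_0)=I+z_0E_{14}$ with $z_0\in\p^{-2c(\chi)}$; and the $(3,2)$-entry element $n_{32}(s)=I+sE_{32}$ with $s\in\p^{2c(\chi)}$.

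For each such $k$ the argument proceeds in three steps. \emph{First}, rewrite $k\cdot u(a,b,z)$, where $u(a,b,z)$ denotes the unipotent in \eqref{vprimedefalteq}, as $u(a',b',z')\cdot k''$ with $k''$ simpler. For the diagonal torus this is a direct conjugation giving $a'=a\alpha/\beta$, $b'=b\alpha/\gamma$, $z'=z\alpha/\delta$ and $k''=\diag(\alpha,\beta,\gamma,\delta)$; for $n_{12}$, $n_{13}$, $n_{14}$ the product is again in the family $u(a',b',z')$, so $k''=I$; for $n_{23}$ and $n_{32}$ one computes $n_{23}(c)u(a,b,z)=u(a,b+ca\varpi^{c(\chi)},z)\cdot n_{23}(c)$ and $n_{32}(s)u(a,b,z)=u(a+sb\varpi^{-c(\chi)},b,z)\cdot n_{32}(s)$, with shifts of order $\p^{c(\chi)}$, so the modified $a',b'$ remain in $\OF^\times$. \emph{Second}, commute $k''$ past $\tau^{c(\chi)}$ via $\pi(k'')\pi(\tau^{c(\chi)})=\pi(\tau^{c(\chi)})\pi(\tau^{-c(\chi)}k''\tau^{c(\chi)})$ and verify that $\tau^{-c(\chi)}k''\tau^{c(\chi)}\in\Kl{n}$, so it fixes $v$ by hypothesis. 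The key calculation is $\tau^{-c(\chi)}n_{23}(c)\tau^{c(\chi)}=n_{23}(c\varpi^{2c(\chi)})$ and $\tau^{-c(\chi)}n_{32}(s)\tau^{c(\chi)}=n_{32}(s\varpi^{-2c(\chi)})$; for $c\in\OF$ and $s\in\p^{2c(\chi)}$ both conjugates lie in $\Kl{n}$, while the torus commutes with $\tau$ and already lies in $\Kl{n}$. \emph{Third}, substitute $(a,b,z)\mapsto(a',b',z')$ in the integral. These are measure-preserving bijections of $\OF^\times\times\OF^\times\times\p^{-2c(\chi)}$ (the additive translations inside $\OF^\times$ being instances of Lemma \ref{changelemma}), and because $\chi$ is trivial on $1+\p^{c(\chi)}$, shifts of order $\p^{c(\chi)}$ preserve $\chi(a)\chi(b)$. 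In the torus case the computation $\chi(ab)=\chi(a'b')\chi(\beta\gamma/\alpha^2)=\chi(a'b')\chi(\lambda)$ uses $\chi(\alpha^{-2})=1$, producing exactly the factor $\chi(\lambda(k))$.

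The main obstacle is the interplay between $\tau^{c(\chi)}$-conjugation in the second step and the subgroup conditions $k_{32}\in\p^{2c(\chi)}$ and $k_{14}\in\p^{-2c(\chi)}$. The exponents in the statement are chosen precisely so that $\tau^{-c(\chi)}$-conjugation maps the $(2,3)$ and $(3,2)$ root generators into $\Kl{n}$, while the $(1,4)$ generator is absorbed directly by translation of $z$. The bulk of the verification is then bookkeeping: checking the commutation formulas for each of the six generator types, confirming that the shifts stay inside the integration domain $\OF^\times\times\OF^\times\times\p^{-2c(\chi)}$, and verifying that the quadratic character $\chi$ is preserved by each shift.
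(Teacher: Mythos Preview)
Your proposal is correct and follows essentially the same approach as the paper: reduce to the same generating set (diagonal torus plus the five one-parameter unipotent subgroups), then for each generator rewrite $k\cdot u(a,b,z)\tau^{c(\chi)}$ as $u(a',b',z')\tau^{c(\chi)}\cdot(\text{element of }\Kl{n})$ and change variables using Lemma~\ref{changelemma}. The paper only spells out the $n_{12}$ case and leaves the rest as ``one can verify''; your three-step framework and the explicit commutation of $n_{23},n_{32}$ past $\tau^{c(\chi)}$ make the structure clearer, but there is no substantive difference in method.
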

\begin{proof}
The subgroup in the statement of the lemma is generated by the elements of the form
$\diag( w_1w_2w, w_1w, w_2w, w )$
for $w,w_1,w_2 \in \OF^\times$, and elements of the subgroups
$$
\begin{bmatrix} 1&&&\p^{-2c(\chi)} \\ &1&& \\ &&1& \\ &&&1 \end{bmatrix}, \quad
\begin{bmatrix} 1&\OF&&\\ &1&& \\ &&1&\OF \\ &&&1 \end{bmatrix}, \quad
\begin{bmatrix} 1&&\OF&\\ &1&&\OF \\ &&1& \\ &&&1 \end{bmatrix}, \quad
\begin{bmatrix} 1&&& \\ &1&\OF& \\ &&1& \\ &&&1 \end{bmatrix}, \quad
\begin{bmatrix} 1&&& \\ &1&& \\ &\p^{2c(\chi)}&1& \\ &&&1 \end{bmatrix}.
$$
Using $v \in V_{\mathrm{Kl}}(n)$,  the definition of $v^\chi$  and Lemma \ref{changelemma} one can verify that
$\pi(k)v^\chi=\chi(\lambda(k))v^\chi$ for each type of  generator $k$. As an illustration, let $x \in \OF$. Then
\begin{align*}
&\pi(\begin{bmatrix} 1&-x&& \\ &1&& \\ &&1&x \\ &&&1 \end{bmatrix}) v^\chi \\
&= \int\limits_{\OF^\times} \int\limits_{\OF^\times} \int\limits_{\p^{-2c(\chi)}} 
\chi(ab) 
\pi(
\begin{bmatrix} 1 & -a\varpi^{-c(\chi)}-x & b \varpi^{-2c(\chi)} & z-xb\varpi^{-2c(\chi)} \\ &1&&b\varpi^{-2c(\chi)} \\ &&1&a\varpi^{-c(\chi)}+x \\ &&&1 \end{bmatrix})
 \tau^{c(\chi)} v\, dz\, d a \, d b\\
&=  \int\limits_{\OF^\times} \int\limits_{\OF^\times} \int\limits_{\p^{-2c(\chi)}}  \chi(ab) 
\pi(
\begin{bmatrix} 1 & -a(1+a^{-1}x\varpi^{c(\chi)})\varpi^{-c(\chi)} & b \varpi^{-2c(\chi)} & z \\ &1&&b\varpi^{-2c(\chi)} \\ &&1&a(1+a^{-1}x\varpi^{c(\chi)})\varpi^{-c(\chi)} \\ &&&1 \end{bmatrix})\\ 
&\qquad \tau^{c(\chi)} v\, dz\, d a \, d b\\
&=v^\chi. 
\end{align*}
This completes the proof. 
\end{proof}
\begin{lemma}
\label{firstnlemma}
Let $(\pi, V)$ be a smooth representation of $\GSp(4,F)$ for which the center of $\GSp(4,F)$ acts trivially. Let $\chi$ be a quadratic character and let $n$ be a non-negative integer. Let $v \in V_{\mathrm{Kl}}(n)$  and define $v^\chi$ as in \eqref{vprimedefalteq}. Then $v^\chi$ is invariant under
the subgroup 
$$
\GSp(4,F) \cap \begin{bmatrix} 1 &&& \\ \p^{\max(n+c(\chi),3c(\chi))}&1&& \\ \p^{\max(n+c(\chi),3c(\chi))} &&1& \\ \p^{\max(n+2c(\chi),4c(\chi))} &\p^{\max(n+c(\chi),3c(\chi))} &\p^{\max(n+c(\chi),3c(\chi))} &1 \end{bmatrix}.
$$
\end{lemma}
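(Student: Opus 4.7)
Since the subgroup in the statement is unipotent (so $\lambda(k)=1$ for every $k$ in it), the claim reduces to showing $\pi(k)v^\chi=v^\chi$ on a generating set. The $\GSp(4,F)$ constraint forces the $(4,3)$-entry to be $-\alpha$ whenever the $(2,1)$-entry is $\alpha$, and forces $(4,2)$ to equal $\beta$ whenever $(3,1)$ equals $\beta$, so it suffices to treat three one-parameter families, indexed by $\alpha\in\p^M$ (positions $(2,1),(4,3)$), $\beta\in\p^M$ (positions $(3,1),(4,2)$), and $\epsilon\in\p^{M+c(\chi)}$ (position $(4,1)$), where $M=\max(n+c(\chi),3c(\chi))$.

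For each such $k$ the plan is to produce a factorization
\[ k\,U(a,b,z)\,\tau^{c(\chi)} = U(a',b',z')\,\tau^{c(\chi)}\,k', \]
with $(a',b',z')$ a function of $(a,b,z)$ and $k'\in\Kl{n}$; equivalently, $\tau^{-c(\chi)}U(a',b',z')^{-1}k\,U(a,b,z)\tau^{c(\chi)}\in\Kl{n}$. Granted this, $v\in V_{\mathrm{Kl}}(n)$ gives $\pi(k')v=v$, and a change of variables $(a,b,z)\mapsto(a',b',z')$ converts the integral back to $v^\chi$, provided that the substitution preserves the product measure, the domain $\OF^\times\times\OF^\times\times\p^{-2c(\chi)}$, and the character $\chi(ab)$. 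Because $(k-I)^2=0$ for $k$ in the opposite Klingen radical, $U^{-1}kU=I+U^{-1}(k-I)U$ has entries that are explicit polynomials in $\alpha,\beta,\epsilon,a,b,z$ with powers of $\varpi^{\pm c(\chi)}$, so the required containment reduces to checking sixteen entry inequalities. For the $\alpha$- and $\epsilon$-type generators I expect the naive choice $(a',b',z')=(a,b,z)$ to suffice, with each entry falling into its required $\p$-power via $M\geq 3c(\chi)$ and $M+c(\chi)\geq n$.

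The main obstacle will be the $\beta$-type generator: the naive choice produces entries of the form $\beta ab\varpi^{-4c(\chi)}$ and $\beta ab^2\varpi^{-5c(\chi)}$ at positions $(1,2)$ and $(1,4)$ that exceed the $\OF$-bounds of $\Kl{n}$. To absorb them I would introduce a correction of the form $a'=a\bigl(1-\beta b\varpi^{-2c(\chi)}\bigr)$, $b'=b$, together with a compatible affine shift of $z$ by an element of $\p^{-2c(\chi)}$ chosen to cancel the residual $(1,4)$-term. The factor $1-\beta b\varpi^{-2c(\chi)}$ lies in $1+\p^{M-2c(\chi)}\subseteq 1+\p^{c(\chi)}$ exactly because $M\geq 3c(\chi)$, which is precisely what is needed to guarantee that $a'\in\OF^\times$, $z'\in\p^{-2c(\chi)}$, the Jacobian is a unit, and $\chi(a'b')=\chi(ab)$ (since $\chi$ is trivial on $1+\p^{c(\chi)}$). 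The underlying substitutions are multiplication by units and additive shifts within $\p^{-2c(\chi)}$, so measure invariance follows from Fubini together with Lemma~\ref{changelemma}, in the spirit of the proof of Lemma~\ref{vprimelemma}. With these corrections the conjugated matrix lies in $\Kl{n}$, settling the $\beta$-case; the general $g(\alpha,\beta,\epsilon)$ then follows by decomposing it as a product of the three one-parameter generators.
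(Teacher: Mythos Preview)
Your proposal is correct and follows the same route as the paper: reduce to the three one-parameter subgroups, handle the $\alpha$- and $\epsilon$-types by direct conjugation (the paper writes out the explicit matrix identities showing $g^{-1}kg\in\Kl{n}$ with no correction needed), and for the $\beta$-type absorb the overflow at positions $(1,2)$, $(3,4)$ and $(1,4)$ via an $a$-correction by a unit in $1+\p^{c(\chi)}$ together with a $z$-shift, then change variables using Lemma~\ref{changelemma}. The paper's exact correction is $a'=a\bigl(1+\beta b\varpi^{-2c(\chi)}\bigr)^{-1}$ rather than your first-order approximation $a\bigl(1-\beta b\varpi^{-2c(\chi)}\bigr)$, but since you wrote ``of the form'' and both lie in $a(1+\p^{c(\chi)})$, this is immaterial to the argument.
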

\begin{proof} 
This is clear if $\chi$ is unramified; assume that $c(\chi)>0$. 
Let $a,b \in \OF^\times$ and $c\in \OF$, and set
$$
g=
\begin{bmatrix} 1 & -a\varpi^{-c(\chi)} & b \varpi^{-2c(\chi)} & c\varpi^{-2c(\chi)}\\ &1&&b\varpi^{-2c(\chi)} \\ &&1&a\varpi^{-c(\chi)} \\ &&&1 \end{bmatrix} 
\tau^{c(\chi)}.
$$
Let $L$ be an integer and $y\in \OF$. We have the following identities:
\begin{align*}
&\begin{multlined}[t] \begin{bmatrix} 1 &&& \\ y\varpi^L&1&& \\ &&1& \\ && -y\varpi^L &1 \end{bmatrix} g\\
 = g\begin{bmatrix}
1+ay\varpi^{L-c(\chi)} & -a^2 y \varpi^{L-3c(\chi)} & (ab+c\varpi^{c(\chi)})y\varpi^{L-2c(\chi)} & 2acy\varpi^{L-3c(\chi)} \\
y\varpi^{L+c(\chi)} & 1-ay\varpi^{L-c(\chi)} & 2by \varpi^L & (ab+c\varpi^{c(\chi)})y\varpi^{L-2c(\chi)}\\
&&1+ay\varpi^{L-c(\chi)} & a^2 y \varpi^{L-3c(\chi)} \\
&&-y\varpi^{L+c(\chi)}  &  1-ay\varpi^{L-c(\chi)}
\end{bmatrix},\end{multlined}\\
&\begin{multlined}[t]\begin{bmatrix}
1&&&\\
&1&&\\
&&1&\\
y\varpi^L&&&1
\end{bmatrix}g \\
 = g \begin{bmatrix}
1-cy\varpi^{L-2c(\chi)} & a cy\varpi^{L-4c(\chi)} & - b cy \varpi^{L-3c(\chi)} & -c^2 y \varpi^{L-4c(\chi)}\\
- b y \varpi^{L-c(\chi)} & 1+ aby \varpi^{L-3c(\chi)} & -b^2 y \varpi^{L-2c(\chi)} & -bcy \varpi^{L-3c(\chi)}\\
-a y\varpi^{L-2c(\chi)} & a^2 y \varpi^{L-4c(\chi)} & 1- ab y \varpi^{L-3c(\chi)} & -a cy\varpi^{L-4c(\chi)} \\
y\varpi^L & -ay\varpi^{L-2c(\chi)} & b y\varpi^{L-c(\chi)} & 1+ cy\varpi^{L-2c(\chi)}
\end{bmatrix}.\end{multlined}
\end{align*}
These identities prove that $v^\chi$ is invariant under the group
$$
\GSp(4,F) \cap \begin{bmatrix} 1 &&& \\ \p^{\max(n+c(\chi),3c(\chi))}&1&& \\  &&1& \\ \p^{\max(n+2c(\chi),4c(\chi))} & &\p^{\max(n+c(\chi),3c(\chi))} &1 \end{bmatrix}.
$$
To prove the remaining invariance, set $L= \max(n+c(\chi),3c(\chi))$. A calculation shows that
\begin{multline*}
\begin{bmatrix}
1&&&\\
&1&&\\
y\varpi^L&&1&\\
&y\varpi^L&&1
\end{bmatrix}g
=
g
\begin{bmatrix}
1&&& (-2byc +ab^3y^2\varpi^{L-3c(\chi)})u^{-2} \varpi^{L-4c(\chi)}\\
&1&& \\ 
&&1& \\
&&&1
\end{bmatrix}\\
\begin{bmatrix}
1&abyu^{-1} \varpi^{L-4c(\chi)}&& \\ &1&& \\ &&1&-abyu^{-1} \varpi^{L-4c(\chi)} \\&&&1 
\end{bmatrix}k
\end{multline*}
for some $k \in \Kl{n}$ with $u = 1+by\varpi^{L-2c(\chi)}$.  Therefore,
\begin{align*}
&\pi(\begin{bmatrix}
1&&&\\
&1&&\\
y\varpi^L&&1&\\
&y\varpi^L&&1
\end{bmatrix}) v^\chi 
= q^{2c(\chi)} \int\limits_{\OF^\times} \int\limits_{\OF^\times} \int\limits_{\OF}  \chi(ab) 
\pi(
\begin{bmatrix} 1 & -a\varpi^{-c(\chi)} & b \varpi^{-2c(\chi)} & c\varpi^{-2c(\chi)}\\ &1&&b\varpi^{-2c(\chi)} \\ &&1&a\varpi^{-c(\chi)} \\ &&&1 \end{bmatrix}
)\tau^{c(\chi)} \\
&\begin{multlined}[t]\pi( \begin{bmatrix}
1&&& (-2byc +ab^3y^2\varpi^{L-3c(\chi)})u^{-2} \varpi^{L-4c(\chi)}\\
&1&& \\ 
&&1& \\
&&&1
\end{bmatrix}\\
\begin{bmatrix}
1&abyu^{-1} \varpi^{L-4c(\chi)}&& \\ &1&& \\ &&1&-abyu^{-1} \varpi^{L-4c(\chi)} \\&&&1 
\end{bmatrix})
 v\,dz\, d a \, d b\end{multlined}\\
&= q^{2c(\chi)}\int\limits_{\OF^\times} \int\limits_{\OF^\times} \int\limits_{\OF}  \chi(ab)  \\
&\pi(
\begin{bmatrix} 1 & -a( 1-byu^{-1}\varpi^{L-2c(\chi)}  )\varpi^{-c(\chi)} & b \varpi^{-2c(\chi)} & c'\varpi^{-2c(\chi)}\\ &1&&b\varpi^{-2c(\chi)} \\ &&1&a( 1-byu^{-1}\varpi^{L-2c(\chi)}  )\varpi^{-c(\chi)}  \\ &&&1 \end{bmatrix}
) \tau^{c(\chi)}
 v\,dz\, d a \, d b,
\end{align*}
where $c'=c(1-2byu^{-2}\varpi^{L-2c(\chi)})-ab^2yu^{-1}(1+byu^{-1}\varpi^{L-2c(\chi)})\varpi^{L-3c(\chi)}$.
Changing variables, we obtain $v^\chi$. 
\end{proof}
Let $(\pi,V)$ be a smooth representation of $\GSp(4,F)$ for which the center of $\GSp(4,F)$ acts trivially, let $\chi$ be a quadratic character, and let $n$ be a non-negative integer. For $v \in V_{\mathrm{Kl}}(n)$ we define 
\begin{equation}
\label{TKLdefeq}
T^{\mathrm{Kl}}_\chi(v)
=
\int\limits_{\OF} 
\pi(
\begin{bmatrix}
1&&&\\
&1&&\\
&x&1&\\
&&&1
\end{bmatrix}) v^\chi\, dx
+
\int\limits_{\p}
\pi(
\begin{bmatrix}
1&&&\\
&&1&\\
&-1&&\\
&&&1
\end{bmatrix}
\begin{bmatrix}
1&&&\\
&1&&\\
&y&1&\\
&&&1
\end{bmatrix})v^\chi\, dy.
\end{equation}
Here, $v^\chi$ as in \eqref{vprimedefalteq}. If $\chi$ is unramified, then $T_\chi^{\mathrm{Kl}}(v) = (1+q^{-1})(1-q^{-1})^2 v$. 
\begin{lemma}
\label{klingentranslemma}
Let $(\pi,V)$ be a smooth representation of $\GSp(4,F)$ for which the center of $\GSp(4,F)$ acts trivially, let $\chi$ be a quadratic character, and let $n$ be a non-negative integer. Let $v \in V_{\mathrm{Kl}}(n)$. Then 
\begin{equation}
\label{kltranseq}
 \pi(k) T_\chi^{\mathrm{Kl}}(v) = \chi(\lambda (k)) T_\chi^{\mathrm{Kl}} (v)
\end{equation}
for $k$ in $\mathrm{Kl}(\p^N)$ where $N = \max(n+2c(\chi),4c(\chi))$. Moreover, $\pi(k) T_\chi^{\mathrm{Kl}}(v) = T_\chi^{\mathrm{Kl}}(v)$ for $k \in \GSp(4,F)$ such that
$$
k \in \begin{bmatrix} 1 &&& \\ \p^{N-c(\chi)}&1&& \\ \p^{N-c(\chi)} &&1& \\ & \p^{N-c(\chi)} & \p^{N-c(\chi)} &1 \end{bmatrix}. 
$$
\end{lemma}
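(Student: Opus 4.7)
The plan is to reduce both invariance statements to a Hecke-averaging argument. Since $v^\chi$ is fixed by $u_{32}(\p^{2c(\chi)})$ by Lemma \ref{vprimelemma}, the defining integrals in $T_\chi^{\mathrm{Kl}}(v)$ collapse to finite sums:
\[
T_\chi^{\mathrm{Kl}}(v) = q^{-2c(\chi)}\Bigl(\sum_{x\,\in\,\OF/\p^{2c(\chi)}}\!\pi(u_{32}(x))v^\chi \;+\sum_{y\,\in\,\p/\p^{2c(\chi)}}\!\pi(s\,u_{32}(y))v^\chi\Bigr).
\]
The representatives $\{u_{32}(x)\}\cup\{s\,u_{32}(y)\}$ form a complete set of representatives for $\SL(2,\OF)_{\mathrm{mid}}/\Gamma_0(\p^{2c(\chi)})_{\mathrm{mid}}$, where ``mid'' refers to the standard $\SL(2)$-subgroup of $\GSp(4,F)$ acting on the $(2,2)(2,3)(3,2)(3,3)$-block (so $\lambda=1$ on it). This identifies $T_\chi^{\mathrm{Kl}}(v)$ as a Hecke average, immediately yielding invariance under left multiplication by $\SL(2,\OF)_{\mathrm{mid}}$.

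Next, $\Kl{N}$ is generated by $\SL(2,\OF)_{\mathrm{mid}}$ together with $P\cap\Kl{N}$, where $P$ denotes the subgroup of Lemma \ref{vprimelemma}. For $k\in P\cap\Kl{N}$, Lemma \ref{vprimelemma} gives $\pi(k)v^\chi=\chi(\lambda(k))v^\chi$; provided that $P$ is stable under conjugation by $\SL(2,\OF)_{\mathrm{mid}}$, one then has
\[
\pi(k)\,T_\chi^{\mathrm{Kl}}(v) = q^{-2c(\chi)}\sum_g\pi(g)\,\pi(g^{-1}kg)v^\chi = \chi(\lambda(k))\,T_\chi^{\mathrm{Kl}}(v),
\]
since $\lambda(g^{-1}kg)=\lambda(k)$ for $g\in\SL(2,F)$. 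Combined with the previous step, this gives the first invariance statement. The second invariance statement is proved identically, replacing $P$ by the subgroup of Lemma \ref{firstnlemma} and using the trivial character; conjugation-stability of the latter subgroup follows from commutator identities such as $u_{32}(x)u_{21}(y')u_{32}(-x)=u_{21}(y')u_{31}(xy')$, which show that the extra entries produced preserve the $\p^{N-c(\chi)}$-conditions.

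The main obstacle is verifying the two structural claims: that the stated representatives really exhaust $\SL(2,\OF)_{\mathrm{mid}}/\Gamma_0(\p^{2c(\chi)})_{\mathrm{mid}}$, which is a Bruhat-decomposition argument in the middle $\SL(2)$-block; and that $P$ (and Lemma \ref{firstnlemma}'s subgroup) is stable under conjugation by $\SL(2,\OF)_{\mathrm{mid}}$, which is a case-by-case commutator calculation on each root subgroup. The latter is the more delicate check, since stability for the factor $u_{32}(\p^{2c(\chi)})\subset P$ under conjugation by $u_{23}(\OF)$ requires a nontrivial Bruhat rewrite in the middle $\SL(2)$, and the resulting extra $u_{32}$-, torus- and $u_{23}$-terms must be verified to remain within $P$ using the precise $\p$-conditions of Lemma \ref{vprimelemma}.
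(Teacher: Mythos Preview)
Your overall strategy---recognize $T_\chi^{\mathrm{Kl}}(v)$ as an average over $\SL(2,\OF)_{\mathrm{mid}}/\Gamma_0(\p^{2c(\chi)})_{\mathrm{mid}}$ and deduce invariance by conjugating generators through the coset representatives---is exactly the mechanism underlying the paper's (very terse) proof. However, your execution has a real gap in the first invariance statement.

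The claim ``$\Kl{N}$ is generated by $\SL(2,\OF)_{\mathrm{mid}}$ together with $P\cap\Kl{N}$'' is false. Both $\SL(2,\OF)_{\mathrm{mid}}$ and $P$ lie inside the Klingen parabolic (the stabilizer of the flag $\langle e_1\rangle\subset\langle e_1,e_2,e_3\rangle$), so the group they generate has zero entries in positions $(2,1),(3,1),(4,1),(4,2),(4,3)$. But $\Kl{N}$ contains the opposite unipotent radical at level $\p^N$, e.g.\ elements of the form
\[
\begin{bmatrix}1&&&\\y\varpi^{N}&1&&\\&&1&\\&&-y\varpi^{N}&1\end{bmatrix},\qquad
\begin{bmatrix}1&&&\\&1&&\\&&1&\\y\varpi^{N}&&&1\end{bmatrix},
\]
and your argument never touches these. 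This is precisely why the paper invokes \emph{both} Lemma~\ref{vprimelemma} and Lemma~\ref{firstnlemma} for the Klingen invariance: Lemma~\ref{firstnlemma} shows $v^\chi$ is already fixed by lower unipotents at level $\p^{N-c(\chi)}\supset\p^N$, and one then checks (as you do for the second statement) that conjugation by the coset representatives $u_{32}(x)$ and $s\,u_{32}(y)$ keeps these inside the combined invariance group of $v^\chi$. So the fix is simply to use Lemma~\ref{firstnlemma} already in the first part, not only in the second.

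A secondary issue: your assertion that $P$ is stable under conjugation by $\SL(2,\OF)_{\mathrm{mid}}$ is also not literally true. The Weyl element $s$ swaps the $(2,3)$ and $(3,2)$ positions, sending $u_{23}(\OF)\subset P$ to $u_{32}(\OF)$, while $P$ only contains $u_{32}(\p^{2c(\chi)})$. What is true (and sufficient) is that for each coset representative $g$ and each $k\in P\cap\Kl{N}$, the product $kg$ can be rewritten as $g'k'$ with $g'$ another representative and $k'$ in the full invariance group of $v^\chi$ coming from Lemmas~\ref{vprimelemma} and~\ref{firstnlemma} together. You gesture at this with the ``nontrivial Bruhat rewrite,'' but the statement to be checked is a coset-permutation statement, not a normalizer statement.
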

\begin{proof}
The group $\mathrm{Kl}(\p^N)$ is generated by its elements contained in the sets
$$
\begin{bmatrix} 1&&& \\ \p^N&1 & & \\ \p^N&&1& \\ \p^N&\p^N&\p^N & 1 \end{bmatrix}, \quad
\begin{bmatrix} 1&\OF & \OF & \OF \\ &1&& \OF \\ & &1& \OF \\ &&&1 \end{bmatrix}, \quad
\begin{bmatrix} \OF^\times &&& \\ &\OF & \OF & \\ & \OF & \OF & \\ &&&\OF^\times \end{bmatrix}, 
$$
and there is a disjoint decomposition 
$$
\SL(2,\OF) = \bigsqcup_{x \in \OF/\p^{2c(\chi)} } \begin{bmatrix} 1& \\ x & 1 \end{bmatrix} \Gamma_0(\p^{2c(\chi)}) \sqcup 
\bigsqcup_{y \in \p / \p^{2c(\chi)}   }\begin{bmatrix} &1 \\ -1 & \end{bmatrix} \begin{bmatrix} 1 & \\  y & 1 \end{bmatrix} \Gamma_0(\p^{2c(\chi)}).
$$
The lemma follows from these two facts, Lemma \ref{vprimelemma}, and Lemma  \ref{firstnlemma}. 
\end{proof}
Let $(\pi,V)$ be a generic, irreducible, admissible representation of $\GSp(4,F)$ with trivial central character
with Whittaker model $\mathcal{W}(\pi,\psi_{c_1,c_2})$; we take $c_1,c_2 \in \OF^\times$.
Let $\chi$ be a quadratic character of $F^\times$. If $W \in \mathcal{W}(\pi,\psi_{c_1,c_2})$
we define the zeta integral of $W$ twisted by $\chi$ to be
\begin{equation}
\label{twistedzetaeq}
Z(s,W,\chi) = \int\limits_{F^\times}\int\limits_{F} W(\begin{bmatrix} t&&& \\ &t&& \\&z&1& \\ &&&1 \end{bmatrix}) |t|^{s-3/2} \chi(t) \, dz\, d^\times t.
\end{equation}
This is the same as the zeta integral of $W$ in the twist $\chi \otimes \pi$ of $\pi$ by $\chi$. See \cite{RS}.
\begin{lemma}
\label{zetalemma}
Let $(\pi,V)$ be a generic, irreducible, admissible representation of $\GSp(4,F)$ with trivial central character
with Whittaker model $\mathcal{W}(\pi,\psi_{c_1,c_2})$; we take $c_1,c_2 \in \OF^\times$.
Let $\chi$ be a quadratic character of $F^\times$ such that $c(\chi)> 0$. 
Let $n$ be a non-negative integer.
Let $W \in V_{\mathrm{Kl}}(n)$, and define $T_\chi^{\mathrm{Kl}}(W)$ as in \eqref{TKLdefeq}. We have
\begin{equation}
\label{tklzetaeq}
Z(s, T_\chi^{\mathrm{Kl}}(W),\chi) = 
(1-q^{-1})  q^{c(\chi)} \chi(c_2)G(\chi,-c(\chi))^3 W(1 ). 
\end{equation}
In particular, if $W$ is the newform of $\pi$, then $T_\chi^{\mathrm{Kl}}(W) \neq 0$. 
\end{lemma}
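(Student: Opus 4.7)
The plan is to unwind the triple-layered integral $Z(s, T_\chi^{\mathrm{Kl}}(W), \chi)$ by substituting the definitions of $T_\chi^{\mathrm{Kl}}$ and $W^\chi$ and then applying Lemma \ref{gausslemma} three times. After substituting \eqref{TKLdefeq} into \eqref{twistedzetaeq}, the integral splits as $Z_1+Z_2$, where $Z_1$ comes from the integration over $x\in\OF$ and $Z_2$ from the integration over $y\in\p$ involving the Weyl element $s$. The goal will be to prove that $Z_1$ produces \eqref{tklzetaeq} while $Z_2=0$. Once \eqref{tklzetaeq} is established, the nonvanishing of $T_\chi^{\mathrm{Kl}}(W_\pi)$ follows at once from $W_\pi(1)\neq 0$ and the nonvanishing of $G(\chi,-c(\chi))$ given by Lemma \ref{gausslemma}.

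For $Z_1$, the identity $m(t,z)\,n(x) = m(t, z+x)$ together with translation invariance of the $z$-integration reduces the $x$-integration to $\mathrm{vol}(\OF)=1$, so $Z_1=Z(s, W^\chi, \chi)$. The key matrix manipulation is the commutator $n(z)\,u(a,b,z') = u^\natural\, n(z)$ with $u^\natural$ in the Borel unipotent radical of $\GSp(4,F)$ (as in the proof of Lemma \ref{vprimelemma}), which after conjugation through $\diag(t,t,1,1)$ and invocation of the Whittaker law gives
\[
W\bigl(m(t,z)\,u(a,b,z')\,\tau^{c(\chi)}\bigr)=\psi\bigl(-c_1 a\varpi^{-c(\chi)}-c_1 zb\varpi^{-2c(\chi)}\bigr)\,W\bigl(m(t,z)\,\tau^{c(\chi)}\bigr).
\]
The $z'$-integration contributes $q^{2c(\chi)}$, the $a$-integration yields the first Gauss sum $G(\chi,-c(\chi))$, and by Lemma \ref{gausslemma} the $b$-integration forces $z \in \varpi^{c(\chi)}\OF^\times$ and produces the second Gauss sum. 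Writing $z=\varpi^{c(\chi)}u$ with $u\in\OF^\times$ reduces the remaining task to evaluating $W(m(t,\varpi^{c(\chi)}u)\,\tau^{c(\chi)})$.

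This residual evaluation rests on the Iwasawa-type factorization
\[
\begin{bmatrix} t\varpi^{-c(\chi)} & 0 \\ u & \varpi^{c(\chi)} \end{bmatrix}=\begin{bmatrix} t & t\varpi^{-c(\chi)}/u \\ 0 & 1 \end{bmatrix}\begin{bmatrix} 0 & -1/u \\ u & \varpi^{c(\chi)} \end{bmatrix}
\]
of the middle $\GL(2)$-block of $m(t,\varpi^{c(\chi)}u)\tau^{c(\chi)}$. The right factor, embedded in $\GSp(4,F)$ via the Klingen Levi, lies in $\Kl{n}$ for every $n\geq 0$, so right-$\Kl{n}$-invariance of $W$ removes it. Extracting the upper unipotent part of the left factor, whose $(2,3)$-entry equals $t\varpi^{-c(\chi)}/u$, and applying the Whittaker law then gives $W(m(t,\varpi^{c(\chi)}u)\,\tau^{c(\chi)})=\psi(c_2 t\varpi^{-c(\chi)}/u)\,W(\diag(t,t,1,1))$. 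Trivial central character combined with $\Kl{n}$-invariance yields $W(\diag(t,t,1,1))=W(1)$ whenever $t\in\OF^\times$. The substitution $u\mapsto 1/u$ then turns the $u$-integration into a third application of Lemma \ref{gausslemma}, producing $G(\chi,-c(\chi))\chi(c_2)$ and forcing $t\in\OF^\times$; the residual $t$-integral contributes $1-q^{-1}$, reproducing \eqref{tklzetaeq}.

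For $Z_2$ the analogous unwinding uses the additional identities $s\,u^\natural = u^{\natural\natural}\,s$ and $n(y)\,\tau^{c(\chi)}=\tau^{c(\chi)}\,n(y\varpi^{-2c(\chi)})$, and delivers a Whittaker character whose $(1,2)$-slot equals $c_1 b\varpi^{-2c(\chi)}(1-zy)-c_1 za\varpi^{-c(\chi)}$. The $a$-integration forces $z\in\OF^\times$ by Lemma \ref{gausslemma}, but then $y\in\p$ makes $1-zy\in\OF^\times$, so $\val(c_1\varpi^{-2c(\chi)}(1-zy))=-2c(\chi)\neq-c(\chi)$ and the $b$-integration vanishes. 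Hence $Z_2=0$. The hardest step will be the Iwasawa decomposition of the middle block and the verification that the extracted upper unipotent factor has $(2,3)$-entry exactly producing the $c_2$-dependent character; once this is in place, the remaining Gauss-sum bookkeeping, though tedious, is routine.
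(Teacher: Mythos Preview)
Your proof is correct and follows the same overall architecture as the paper's: split into the two pieces of $T_\chi^{\mathrm{Kl}}$, extract three Gauss sums from the first piece, and show the second piece vanishes. The bookkeeping differs in two places. First, the paper invokes Lemma~4.1.1 of \cite{RS} at the outset to drop the $z$-integration from the zeta integral \eqref{twistedzetaeq} (using that $T_\chi^{\mathrm{Kl}}(W)\in V_{\mathrm{Kl}}(N,\chi)$), so that only the $t$-integral remains; you instead keep the $z$-variable and absorb it into $x$ by the shift $m(t,z)n(x)=m(t,z+x)$. Second, and as a consequence, your vanishing argument for $Z_2$ is a two-step Gauss-sum argument (the $a$-integral forces $z\in\OF^\times$, then the $b$-integral dies because the exponent has valuation $-2c(\chi)$), whereas in the paper the $z$ is already gone and the $a$-dependence in the Whittaker character disappears entirely, so $\int_{\OF^\times}\chi(a)\,da=0$ kills $Z_2$ immediately. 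Your Iwasawa factorization of the middle $\GL(2)$-block is equivalent to the paper's Bruhat-type identity and lands in $\Kl{n}$ for the same reason. The paper's route is slightly cleaner for $Z_2$; your route avoids the external reference to \cite{RS}.
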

\begin{proof}
To begin, we note that by Lemma 4.1.1 of \cite{RS} we have
$$
Z(s,T_\chi^{\mathrm{Kl}}(W),\chi) = \int\limits_{F^\times} T_\chi^{\mathrm{Kl}}(W)(\begin{bmatrix} t&&& \\ &t&& \\&&1& \\ &&&1 \end{bmatrix}) |t|^{s-3/2} \chi(t) \, d^\times t.
$$
Therefore, the first part of $Z(s,T_\chi^{\mathrm{Kl}}(W),\chi)$ is:
\begin{align*}
&\begin{multlined}[t]\int\limits_{F^\times}
\int\limits_{\OF} \int\limits_{\OF^\times} \int\limits_{\OF^\times} \int\limits_{\p^{-2c(\chi)}}
W(
\begin{bmatrix}
t&&& \\ &t&& \\ &&1& \\ &&&1 
\end{bmatrix}
\begin{bmatrix}
1&&&\\
&1&&\\
&x&1&\\
&&&1
\end{bmatrix} \\
\begin{bmatrix} 1 & -a\varpi^{-c(\chi)} & b \varpi^{-2c(\chi)} & z\\ &1&&b\varpi^{-2c(\chi)} \\ &&1&a\varpi^{-c(\chi)} \\ &&&1 \end{bmatrix}
\begin{bmatrix} 1&&& \\ &\varpi^{-c(\chi)}&& \\ &&\varpi^{c(\chi)}& \\ &&&1 \end{bmatrix} ) 
|t|^{s-3/2} \chi(t) \chi(ab)\, dz \, da\, db\, dx\, d^\times t \end{multlined}\\
&=\begin{multlined}[t]q^{2c(\chi)}\int\limits_{F^\times}
\int\limits_{\OF} \int\limits_{\OF^\times} \int\limits_{\OF^\times} 
W(
\begin{bmatrix}
t&&& \\ &t&& \\ &&1& \\ &&&1 
\end{bmatrix}
\begin{bmatrix}
1&&&\\
&1&&\\
&x&1&\\
&&&1
\end{bmatrix} \\
\begin{bmatrix} 1 & -a\varpi^{-c(\chi)} & b \varpi^{-2c(\chi)} & \\ &1&&b\varpi^{-2c(\chi)} \\ &&1&a\varpi^{-c(\chi)} \\ &&&1 \end{bmatrix}
\begin{bmatrix} 1&&& \\ &\varpi^{-c(\chi)}&& \\ &&\varpi^{c(\chi)}& \\ &&&1 \end{bmatrix} ) 
|t|^{s-3/2} \chi(t) \chi(ab)\,  da\, db\, dx\, d^\times t \end{multlined}\\
&=q^{2c(\chi)}\begin{multlined}[t]\int\limits_{F^\times}
\int\limits_{\OF} \int\limits_{\OF^\times} \int\limits_{\OF^\times} \psi(c_1(-a\varpi^{-c(\chi)}-bx\varpi^{-2c(\chi)})) 
W(
\begin{bmatrix}
t&&& \\ &t&& \\ &&1& \\ &&&1 
\end{bmatrix}
\begin{bmatrix}
1&&&\\
&1&&\\
&x&1&\\
&&&1
\end{bmatrix} \\
\begin{bmatrix} 1&&& \\ &\varpi^{-c(\chi)}&& \\ &&\varpi^{c(\chi)}& \\ &&&1 \end{bmatrix} ) 
|t|^{s-3/2} \chi(t) \chi(ab)\,  da\, db\, dx\, d^\times t \end{multlined}\\
&=q^{2c(\chi)}\begin{multlined}[t] ( \int\limits_{\OF^\times}\chi(a) \psi(-c_1 a\varpi^{-c(\chi)})\, da) 
 \int\limits_{\OF}
( \int\limits_{\OF^\times} \chi(b)  \psi(-c_1 bx\varpi^{-2c(\chi)}) \, db) \\
\int\limits_{F^\times}
W(
\begin{bmatrix}
t&&& \\ &t&& \\ &&1& \\ &&&1 
\end{bmatrix}
\begin{bmatrix}
1&&&\\
&1&&\\
&x&1&\\
&&&1
\end{bmatrix} 
\begin{bmatrix} 1&&& \\ &\varpi^{-c(\chi)}&& \\ &&\varpi^{c(\chi)}& \\ &&&1 \end{bmatrix} ) 
|t|^{s-3/2} \chi(t) \, d^\times t\, dx. \end{multlined}
\intertext{By Lemma \ref{gausslemma} the integral in the $b$ variable is zero unless $v(x) = c(\chi)$. Continuing,}
&=q^{c(\chi)}\begin{multlined}[t] ( \int\limits_{\OF^\times}\chi(a) \psi(-c_1 a\varpi^{-c(\chi)})\, da) 
 \int\limits_{\OF^\times}
( \int\limits_{\OF^\times} \chi(b)  \psi(-c_1 bx\varpi^{-c(\chi)}) \, db) \\
\int\limits_{F^\times}
W(
\begin{bmatrix}
t&&& \\ &t&& \\ &&1& \\ &&&1 
\end{bmatrix}
\begin{bmatrix}
1&&&\\
&1&&\\
&x\varpi^{c(\chi)}&1&\\
&&&1
\end{bmatrix} 
\begin{bmatrix} 1&&& \\ &\varpi^{-c(\chi)}&& \\ &&\varpi^{c(\chi)}& \\ &&&1 \end{bmatrix} ) 
|t|^{s-3/2} \chi(t) \, d^\times t\, dx \end{multlined}\\
&=\begin{multlined}[t] q^{c(\chi)}G(\chi,-c(\chi))^2
 \int\limits_{\OF^\times} \chi(x) 
\int\limits_{F^\times}
W(
\begin{bmatrix}
t&&& \\ &t&& \\ &&1& \\ &&&1 
\end{bmatrix}
\begin{bmatrix}
1&&&\\
&1&x^{-1}\varpi^{-c(\chi)}&\\
&&1&\\
&&&1
\end{bmatrix} \\
\begin{bmatrix}
1&&&\\
&-x^{-1}\varpi^{-c(\chi)}&&\\
&&-x\varpi^{c(\chi)}&\\
&&&1
\end{bmatrix}  
\begin{bmatrix}
1&&&\\
&&1&\\
&-1&&\\
&&&1
\end{bmatrix} 
\begin{bmatrix}
1&&&\\
&1&x^{-1}\varpi^{-c(\chi)}&\\
&&1&\\
&&&1
\end{bmatrix} \\
\begin{bmatrix} 1&&& \\ &\varpi^{-c(\chi)}&& \\ &&\varpi^{c(\chi)}& \\ &&&1 \end{bmatrix} ) 
|t|^{s-3/2} \chi(t) \, d^\times t\, dx \end{multlined}\\
&=\begin{multlined}[t] q^{c(\chi)}G(\chi,-c(\chi))^2
 \int\limits_{\OF^\times} \chi(x)
\int\limits_{F^\times}
\psi(c_2tx^{-1}\varpi^{-c(\chi)}) 
W(
\begin{bmatrix}
t&&& \\ &t&& \\ &&1& \\ &&&1 
\end{bmatrix}  ) 
|t|^{s-3/2} \chi(t) \, d^\times t\, dx \end{multlined}\\
&=\begin{multlined}[t] q^{c(\chi)}G(\chi,-c(\chi))^2  
\int\limits_{F^\times}
( \int\limits_{\OF^\times} \chi(x)
\psi(c_2tx\varpi^{-c(\chi)}) \, dx)
W(
\begin{bmatrix}
t&&& \\ &t&& \\ &&1& \\ &&&1 
\end{bmatrix}  ) 
|t|^{s-3/2} \chi(t) \, d^\times t\end{multlined}.
\intertext{Again, by Lemma \ref{gausslemma} the integral in the $x$ variable is zero unless $v(t)=0$. Thus, our quantity is:}
&=\begin{multlined}[t] q^{c(\chi)}G(\chi,-c(\chi))^2
\int\limits_{\OF^\times}
( \int\limits_{\OF^\times} \chi(x)
\psi(c_2tx\varpi^{-c(\chi)}) \, dx)
W(
\begin{bmatrix}
t&&& \\ &t&& \\ &&1& \\ &&&1 
\end{bmatrix}  ) 
|t|^{s-3/2} \chi(t) \, d^\times t\end{multlined} \\
&=\begin{multlined}[t] q^{c(\chi)}G(\chi,-c(\chi))^2
\int\limits_{\OF^\times}
\chi(t)( \int\limits_{\OF^\times} \chi(x)
\psi(c_2x\varpi^{-c(\chi)}) \, dx)
W(1 ) 
\chi(t) \, d^\times t\end{multlined}\\
&=\begin{multlined}[t] q^{c(\chi)} \chi(c_2)G(\chi,-c(\chi))^3
\int\limits_{\OF^\times}
W(1 ) 
 \, d^\times t\end{multlined}\\
&=\begin{multlined}[t] (1-q^{-1})  q^{c(\chi)} \chi(c_2)G(\chi,-c(\chi))^3 W(1 ).
\end{multlined}
\end{align*}
Finally, we prove that the second part of $Z(s,T_\chi^{\mathrm{Kl}}(W),\chi)$ is zero:
\begin{align*}
&\begin{multlined}[t]\int\limits_{F^\times}
\int\limits_{\p} \int\limits_{\OF^\times} \int\limits_{\OF^\times} \int\limits_{\p^{-2c(\chi)}}
W(
\begin{bmatrix}
t&&& \\ &t&& \\ &&1& \\ &&&1 
\end{bmatrix}
\begin{bmatrix}
1&&&\\
&&1&\\
&-1&&\\
&&&1
\end{bmatrix}
\begin{bmatrix}
1&&&\\
&1&&\\
&y&1&\\
&&&1
\end{bmatrix} \\
\begin{bmatrix} 1 & -a\varpi^{-c(\chi)} & b \varpi^{-2c(\chi)} & z\\ &1&&b\varpi^{-2c(\chi)} \\ &&1&a\varpi^{-c(\chi)} \\ &&&1 \end{bmatrix}
\begin{bmatrix} 1&&& \\ &\varpi^{-c(\chi)}&& \\ &&\varpi^{c(\chi)}& \\ &&&1 \end{bmatrix} ) 
|t|^{s-3/2} \chi(t) \chi(ab)\, dz \, da\, db\, dy\, d^\times t \end{multlined}\\
&=\begin{multlined}[t]q^{2c(\chi)}\int\limits_{F^\times}
\int\limits_{\p} \int\limits_{\OF^\times} \int\limits_{\OF^\times} 
\psi(-c_2ty) W(
\begin{bmatrix}
t&&& \\ &t&& \\ &&1& \\ &&&1 
\end{bmatrix}
\begin{bmatrix}
1&&&\\
&&1&\\
&-1&&\\
&&&1
\end{bmatrix} \\
\begin{bmatrix} 1 & -a\varpi^{-c(\chi)} & b \varpi^{-2c(\chi)} & \\ &1&&b\varpi^{-2c(\chi)} \\ &&1&a\varpi^{-c(\chi)} \\ &&&1 \end{bmatrix}
\begin{bmatrix} 1&&& \\ &\varpi^{-c(\chi)}&& \\ &&\varpi^{c(\chi)}& \\ &&&1 \end{bmatrix} ) 
|t|^{s-3/2} \chi(t) \chi(ab) \, da\, db\, dy\, d^\times t \end{multlined}\\
&=\begin{multlined}[t]q^{2c(\chi)}\int\limits_{F^\times}
\int\limits_{\p} \int\limits_{\OF^\times} \int\limits_{\OF^\times} 
\psi(-c_2ty)\psi(c_1b\varpi^{-2c(\chi)})  W(
\begin{bmatrix}
t&&& \\ &t&& \\ &&1& \\ &&&1 
\end{bmatrix}
\begin{bmatrix}
1&&&\\
&&1&\\
&-1&&\\
&&&1
\end{bmatrix} \\
\begin{bmatrix} 1&&& \\ &\varpi^{-c(\chi)}&& \\ &&\varpi^{c(\chi)}& \\ &&&1 \end{bmatrix} ) 
|t|^{s-3/2} \chi(t) \chi(ab) \, da\, db\, dy\, d^\times t \end{multlined}\\
&=0.
\end{align*}
The last equality holds because $\chi$ is ramified by assumption. 
\end{proof}
Let $(\pi,V)$ be a smooth representation of $\GSp(4,F)$ for which the center of $\GSp(4,F)$ acts trivially, let $\chi$ be a quadratic character, and let $n$ be a non-negative integer. Define $N=\max(n+2c(\chi),4c(\chi))$. For $v \in V_{\mathrm{Kl}}(n)$ we define 
\begin{equation}
\label{twisteq}
T_\chi(v) = 
q\int\limits_{\OF} \pi(\begin{bmatrix} 1&&& z \varpi^{-N} \\ &1&& \\ &&1& \\ &&&1 \end{bmatrix} ) T^{\mathrm{Kl}}_\chi (v)\, dz 
+
 \pi(t_N)  \int\limits_{\OF} \pi(\begin{bmatrix} 1&&& z \varpi^{-N+1} \\ &1&& \\ &&1& \\ &&&1 \end{bmatrix} ) T^{\mathrm{Kl}}_\chi (v)\, dz.
\end{equation}
Here, $T^{\mathrm{Kl}}_\chi (v)$ is defined as in \eqref{TKLdefeq}. Explicitly, 
\begin{align}
&q^{-2c(\chi)}T_\chi (v)\nonumber \\
&=q\int\limits_{\OF}\int\limits_{\OF}\int\limits_{\OF^\times}\int\limits_{\OF^\times}
\chi(ab)\pi(
\begin{bmatrix} 1&&& \\ &1&& \\ &x&1& \\ &&&1 \end{bmatrix} 
\begin{bmatrix} 1&-a\varpi^{-c(\chi)} & b\varpi^{-2c(\chi)} & z\varpi^{-N} \\ &1&&b\varpi^{-2c(\chi)} \\ &&1&a\varpi^{-c(\chi)} \\ &&&1 \end{bmatrix} )
\tau^{c(\chi)}v\, da\, db\, dx\, dz\label{termoneeq} \\
&+q\int\limits_{\OF}\int\limits_{\p}\int\limits_{\OF^\times}\int\limits_{\OF^\times} \begin{multlined}[t] 
\chi(ab)\pi(
\begin{bmatrix} 1&&& \\ &&1& \\ &-1&& \\ &&&1 \end{bmatrix}
\begin{bmatrix} 1&&& \\ &1&& \\ &y&1& \\ &&&1 \end{bmatrix}  \\
\begin{bmatrix} 1&-a\varpi^{-c(\chi)} & b\varpi^{-2c(\chi)} & z\varpi^{-N} \\ &1&&b\varpi^{-2c(\chi)} \\ &&1&a\varpi^{-c(\chi)} \\ &&&1 \end{bmatrix}\! )
\tau^{c(\chi)}v\, da\, db\, dy\, dz\end{multlined}\label{termtwoeq} \\
&+\int\limits_{\OF}\int\limits_{\OF}\int\limits_{\OF^\times}\int\limits_{\OF^\times} \chi(ab)
\pi(t_N
\begin{bmatrix} 1&&& \\ &1&& \\ &x&1& \\ &&&1 \end{bmatrix} 
\begin{bmatrix} 1&-a\varpi^{-c(\chi)} & b\varpi^{-2c(\chi)} & z\varpi^{-N+1} \\ &1&&b\varpi^{-2c(\chi)} \\ &&1&a\varpi^{-c(\chi)} \\ &&&1 \end{bmatrix} )
\tau^{c(\chi)}v\, da\, db\, dx\, dz \label{termthreeeq} \\
&+\int\limits_{\OF}\!\int\limits_{\p}\!\int\limits_{\OF^\times} \int\limits_{\OF^\times} \begin{multlined}[t]
\pi(t_N 
\begin{bmatrix} 1&&& \\ &&1& \\ &-1&& \\ &&&1 \end{bmatrix} 
\begin{bmatrix} 1&&& \\ &1&& \\ &y&1& \\ &&&1 \end{bmatrix}\\
\begin{bmatrix} 1&-a\varpi^{-c(\chi)} & b\varpi^{-2c(\chi)} & z\varpi^{-N+1} \\ &1&&b\varpi^{-2c(\chi)} \\ &&1&a\varpi^{-c(\chi)} \\ &&&1 \end{bmatrix} )
\tau^{c(\chi)}v\, da\, db\, dy\, dz.\end{multlined} \label{termfoureq}
\end{align}

\begin{lemma}
\label{mainlemma}
Let $(\pi,V)$ be a smooth representation of $\GSp(4,F)$ for which the center of $\GSp(4,F)$ acts trivially and let $\chi$ be a quadratic character.  Let $n$ be a non-negative integer and define 
$
N= \max(n+2c(\chi),4c(\chi)).
$
Let $v \in V_{\mathrm{Kl}}(n)$. 
\begin{enumerate}
\item[i)] We have $\pi(k) T_\chi(v) = \chi(\lambda(k)) T_\chi (v)$  for $k \in \K{N}$.
\item[ii)] Assume that $c(\chi)>0$. If $T_\chi(v)$ is invariant under 
under the elements
\begin{equation}
\label{r1r2eq}
\begin{bmatrix} 1 &r_1 \varpi^{-1} & r_2 \varpi^{-1} & \\ &1&&r_2\varpi^{-1} \\ &&1& -r_1 \varpi^{-1} \\ &&& 1 \end{bmatrix}
\end{equation}
for $r_1,r_2 \in \OF$, then $T_\chi(v)=0$.
\end{enumerate}
\end{lemma}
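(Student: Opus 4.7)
For part (i), I would verify the transformation law $\pi(k)T_\chi(v) = \chi(\lambda(k))T_\chi(v)$ on a generating set for $\K{N}$, namely the Klingen subgroup $\Kl{N}$, the one-parameter upper-right unipotent $\{U(y\varpi^{-N}): y \in \OF\}$ with $U(t) = I + tE_{14}$, and the element $t_N$. The $\Kl{N}$-transformation is inherited from Lemma \ref{klingentranslemma} applied to $T_\chi^{\mathrm{Kl}}(v)$ after absorbing the $\Kl{N}$-action on the $z$-parameter of \eqref{twisteq} into the $\OF$-integration. Invariance under $U(y\varpi^{-N})$ is built into the first summand of \eqref{twisteq} by construction and follows for the second summand from $t_N^{-1}U(y\varpi^{-N})t_N = I - y\varpi^N E_{41} \in \Kl{N}$, which acts trivially on $T_\chi^{\mathrm{Kl}}(v)$. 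Invariance under $t_N$ itself uses $t_N^2 = \diag(-1,1,1,-1) \in \Kl{N}$ together with this same relation to check that $\pi(t_N)$ exchanges the two summands of $T_\chi(v)$ up to the factor $q$.

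For part (ii), my plan is to use the hypothesis to force a Gauss-sum vanishing. By the assumed invariance,
\[
T_\chi(v) = \int_\OF \int_\OF \pi(u(r_1,r_2))\, T_\chi(v)\, dr_1\, dr_2,
\]
where I write $u(r_1,r_2)$ for the element in \eqref{r1r2eq}. Substituting the explicit expansion \eqref{termoneeq}--\eqref{termfoureq} and pushing $u(r_1,r_2)$ past the intervening factors via the commutations $u(r_1,r_2)Y(x) = Y(x)\,u(r_1+r_2 x,\, r_2)$, $W^{-1}u(r_1,r_2)W = u(-r_2, r_1)$ (where $W$ is the Weyl element appearing in \eqref{termtwoeq}, \eqref{termfoureq}), and $t_N^{-1}u(r_1,r_2)t_N = I - r_1\varpi^{N-1}(E_{42}+E_{31}) + r_2\varpi^{N-1}(E_{21}-E_{43})$ (this last element lies in the lower-unipotent group of Lemma \ref{klingentranslemma} and hence acts trivially on $T_\chi^{\mathrm{Kl}}(v)$), I expect the action of $u(r_1,r_2)$ on each of the four integrands to reduce to a shift of the inner integration variables $(a,b)$ by a pair involving $r_1\varpi^{2c(\chi)-1}$ and $r_2\varpi^{c(\chi)-1}$, plus a shift in $z$ absorbed by the $\OF$-invariance of the $z$-integration (this is legal because $N \geq 2c(\chi)+1$). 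After restoring the original integration domain by change of variables, the integrand acquires a factor of the form $\chi(1 + r\alpha\varpi^{c(\chi)-1})$ for some $\alpha \in \OF^\times$ and $r \in \{r_1,r_2\}$, and the key vanishing
\[
\int_\OF \chi(1 + r\alpha\varpi^{c(\chi)-1})\, dr = 0
\]
holds because $\chi$ restricts to a non-trivial character of the compact group $1 + \p^{c(\chi)-1}$; the edge case $c(\chi)=1$ is handled by extending $\chi$ by zero to $\p$ and invoking $\int_\OF \chi(u)\,du = 0$. Hence the averaged contribution of each of the four terms vanishes, yielding $T_\chi(v) = 0$.

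The main obstacle is the bookkeeping for the third and fourth terms: after commuting $u(r_1,r_2)$ past $t_N$ and $Y(x)$, the resulting lower-Siegel unipotent must still be pushed through the innermost factor $M(a,b,z)$, and this conjugation produces upper-triangular corrections via the Heisenberg-style commutators $[E_{21}, E_{14}]$, $[E_{31}, E_{14}]$ etc.\ that must be reorganized before the character shift can be read off cleanly. Handling these residual corrections requires the full strength of Lemma \ref{klingentranslemma}, in particular its lower-unipotent invariance at level $\p^{N-c(\chi)}$, together with care in the degenerate case $c(\chi)=1$ where the shift $r\varpi^{c(\chi)-1}$ does not lie in $\p$ and the change-of-variables step requires the $\chi$-by-zero extension.
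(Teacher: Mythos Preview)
Your approach to part (i) is different from the paper's. The paper does not verify on generators; instead it observes that, by the coset decomposition $\K{N}=\bigsqcup U(z\varpi^{-N})\Kl{N}\sqcup\bigsqcup t_NU(z\varpi^{-N+1})\Kl{N}$ and Lemma~\ref{klingentranslemma}, the integral $\int_{\K{N}}\chi(\lambda(k))\pi(k)T_\chi^{\mathrm{Kl}}(v)\,dk$ is a positive multiple of $T_\chi(v)$, so the transformation law is automatic. Your generator check works for $\Kl{N}$ and for $U(y\varpi^{-N})$, but the $t_N$ step is not what you describe: applying $\pi(t_N)$ to $qA+\pi(t_N)B$ gives $q\pi(t_N)A+B$, and equating this with $qA+\pi(t_N)B$ is the statement $\pi(t_N)(qA-B)=qA-B$. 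Since $qA-B=q\int_{\OF^\times}\pi(U(z\varpi^{-N}))T_\chi^{\mathrm{Kl}}(v)\,dz$, what is actually needed is the Bruhat identity $t_NU(z\varpi^{-N})\in U(-z^{-1}\varpi^{-N})\Kl{N}$ for $z\in\OF^\times$, not the conjugation relation you cite.

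Part (ii) has a genuine gap. Your computation of $t_N^{-1}u(r_1,r_2)t_N$ is correct, and this lower unipotent at level $\p^{N-1}$ does lie in the invariance group of Lemma~\ref{klingentranslemma} (since $N-1\geq N-c(\chi)$). But that means precisely that the $(r_1,r_2)$-average leaves the third and fourth terms \emph{unchanged}, not that it kills them. If instead you push the lower unipotent all the way through $Y(x)$ and $M(a,b,z)$ to act on $v$, you are redoing the computation of Lemma~\ref{firstnlemma}, whose conclusion is that the conjugated element lies in $\Kl{n}$ and fixes $v$; again no $\chi$-shift appears. So after the first averaging you are left with $T_\chi(v)=\pi(t_N)\int_{\OF}\pi(U(z\varpi^{-N+1}))T_\chi^{\mathrm{Kl}}(v)\,dz$, and your argument stalls. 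The idea you are missing is to now invoke part (i): since $t_N\in\K{N}$ with $\lambda(t_N)=1$, one has $T_\chi(v)=\pi(t_N)^{-1}T_\chi(v)=\int_{\OF}\pi(U(z\varpi^{-N+1}))T_\chi^{\mathrm{Kl}}(v)\,dz$, and a \emph{second} averaging over $u(r_1,r_2)$ applied to this $t_N$-free expression vanishes by the same Gauss-sum mechanism that handled the first two terms.
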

\begin{proof}
i)  
Fix a Haar measure for the group $\GSp(4,F)$. By Lemma 3.3.1 of \cite{RS} there is a disjoint decomposition
$$
\K{N} = \bigsqcup_{z \in \OF/\p^N} \begin{bmatrix} 1&&&z \varpi^{-N} \\ &1&& \\ &&1& \\ &&&1 \end{bmatrix} \Kl{N}
\sqcup \bigsqcup_{z \in \OF/\p^{N-1}} t_N \begin{bmatrix} 1&&&z \varpi^{-N+1} \\ &1&& \\ &&1& \\ &&&1 \end{bmatrix} \Kl{N}.
$$
Here, the second disjoint union is not present if $N=0$. Therefore, by \eqref{kltranseq},
\begin{align*}
\int\limits_{\K{N}} \chi (\lambda (k)) \pi (k) T_\chi^{\mathrm{Kl}}(v)\, dk
&=\begin{multlined}[t] \vl (\Kl{N}) \sum_{z \in \OF/\p^N} 
 \pi (\begin{bmatrix} 1&&&z \varpi^{-N} \\ &1&& \\ &&1& \\ &&&1 \end{bmatrix} ) T_\chi^{\mathrm{Kl}}(v)\\
+ \vl (\Kl{N}) \sum_{z \in \OF/\p^{N-1}}  \pi (t_N \begin{bmatrix} 1&&&z \varpi^{-N+1} \\ &1&& \\ &&1& \\ &&&1 \end{bmatrix} ) T_\chi^{\mathrm{Kl}}(v) \end{multlined} \\
&=\begin{multlined}[t] \vl (\Kl{N}) q^N\int\limits_{\OF}
 \pi (\begin{bmatrix} 1&&&z \varpi^{-N} \\ &1&& \\ &&1& \\ &&&1 \end{bmatrix} ) T_\chi^{\mathrm{Kl}}(v)\, dz\\
+ \vl (\Kl{N})q^{N-1} \int\limits_{\OF}  \pi (t_N \begin{bmatrix} 1&&&z \varpi^{-N+1} \\ &1&& \\ &&1& \\ &&&1 \end{bmatrix} ) T_\chi^{\mathrm{Kl}}(v)\, dz. \end{multlined}
\end{align*}
This is a positive multiple of $T_\chi(v)$, and thus  implies the desired transformation rule.

ii)  Assume that $T_\chi(v)$ is invariant under the elements in \eqref{r1r2eq}. Then
\begin{align}
T_\chi(v)
& = \int\limits_{\OF}\int\limits_{\OF} \pi ( \begin{bmatrix} 1 &r_1 \varpi^{-1} & r_2 \varpi^{-1} & \\ &1&&r_2\varpi^{-1} \\ &&1& -r_1 \varpi^{-1} \\ &&& 1 \end{bmatrix} ) T_\chi(v) \, dr_1\, dr_2\nonumber\\
&=\begin{multlined}[t] q\int\limits_{\OF} \int\limits_{\OF}\int\limits_{\OF} \pi(\begin{bmatrix} 1 &r_1 \varpi^{-1} & r_2 \varpi^{-1} &z \varpi^{-N}  \\ &1&&r_2\varpi^{-1} \\ &&1& -r_1 \varpi^{-1} \\ &&& 1 \end{bmatrix}) T^{\mathrm{Kl}}_\chi (v)\, dr_1\, dr_2\, dz \\
+
 \pi(t_N)  \int\limits_{\OF}\int\limits_{\OF} \int\limits_{\OF} \pi(
\begin{bmatrix} 1&&& \\ r_2\varpi^{N-1} &1&& \\ -r_1\varpi^{N-1} &&1& \\ & -r_1\varpi^{N-1} & -r_2 \varpi^{N-1} & 1 \end{bmatrix} \\
\begin{bmatrix} 1&&& z \varpi^{-N+1} \\ &1&& \\ &&1& \\ &&&1 \end{bmatrix} ) T^{\mathrm{Kl}}_\chi (v)\, dz\, dr_1\, dr_2. 
\end{multlined} \label{etaTchieq}
\end{align}
We claim that the first summand of \eqref{etaTchieq} is zero. Now
\begin{align*}
 &\int\limits_{\OF}\int\limits_{\OF}\int\limits_{\OF} \pi( \begin{bmatrix} 1 &r_1 \varpi^{-1} & r_2 \varpi^{-1} & z \varpi^{-N} \\ &1&&r_2\varpi^{-1} \\ &&1& -r_1 \varpi^{-1} \\ &&& 1 \end{bmatrix}) T^{\mathrm{Kl}}_\chi (v)\, dr_1\, dr_2\, dz\\
&\begin{multlined}[t] =\int\limits_{\OF}  \int\limits_{\OF}\int\limits_{\OF}\int\limits_{\OF}
\pi(
\begin{bmatrix}
1&&&\\
&1&&\\
&x&1&\\
&&&1
\end{bmatrix}
 \begin{bmatrix} 1 &(r_1+xr_2) \varpi^{-1} & r_2 \varpi^{-1} & z \varpi^{-N}\\ &1&&r_2\varpi^{-1} \\ &&1& -(r_1+xr_2) \varpi^{-1} \\ &&& 1 \end{bmatrix}
) v^\chi\,dr_1\, dr_2\,  dx\, dz\\
+
\int\limits_{\p} \int\limits_{\OF}\int\limits_{\OF}\int\limits_{\OF}
\pi(
\begin{bmatrix}
1&&&\\
&&1&\\
&-1&&\\
&&&1
\end{bmatrix} 
\begin{bmatrix}
1&&&\\
&1&&\\
&y&1&\\
&&&1
\end{bmatrix} \\
\begin{bmatrix} 1 &(r_1y-r_2) \varpi^{-1} & r_1 \varpi^{-1} &z\varpi^{-N} \\ &1&&r_1\varpi^{-1} \\ &&1& -(r_1y-r_2) \varpi^{-1} \\ &&& 1 \end{bmatrix}
)v^\chi\,dr_1\,dr_2\, dy\, dz\end{multlined} \\
&\begin{multlined}[t] =\int\limits_{\OF} \int\limits_{\OF} \int\limits_{\OF}  \int\limits_{\OF} 
\pi(
\begin{bmatrix}
1&&&\\
&1&&\\
&x&1&\\
&&&1
\end{bmatrix}
 \begin{bmatrix} 1 & r_1 \varpi^{-1} & r_2 \varpi^{-1} & z\varpi^{-N}\\ &1&&r_2\varpi^{-1} \\ &&1& -r_1 \varpi^{-1} \\ &&& 1 \end{bmatrix}
) v^\chi\,dr_1\, dr_2\,  dx\, dz\\
+
\int\limits_{\p}\int\limits_{\OF}\int\limits_{\OF}\int\limits_{\OF}
\pi(
\begin{bmatrix}
1&&&\\
&&1&\\
&-1&&\\
&&&1
\end{bmatrix} 
\begin{bmatrix}
1&&&\\
&1&&\\
&y&1&\\
&&&1
\end{bmatrix} 
\begin{bmatrix} 1 &r_2 \varpi^{-1} & r_1 \varpi^{-1} &z\varpi^{-N} \\ &1&&r_1\varpi^{-1} \\ &&1& -r_2 \varpi^{-1} \\ &&& 1 \end{bmatrix}
)v^\chi\,dr_1\,dr_2\, dy\, dz.\end{multlined}
\end{align*}
Moreover,
\begin{align*}
&\int\limits_{\OF} \int\limits_{\OF}  \int\limits_{\OF} 
\pi(
 \begin{bmatrix} 1 & r_1 \varpi^{-1} & r_2 \varpi^{-1} & z\varpi^{-N}\\ &1&&r_2\varpi^{-1} \\ &&1& -r_1 \varpi^{-1} \\ &&& 1 \end{bmatrix}
) v^\chi\,dr_1\, dr_2\, dz\\
&=q^{2c(\chi)} \int\limits_{\OF^\times} \int\limits_{\OF^\times}  \int\limits_{\OF} \int\limits_{\OF} \chi(ab) 
\pi( 
\begin{bmatrix} 1 & -au_1\varpi^{-c(\chi)} & bu_2 \varpi^{-2c(\chi)} & z\varpi^{-N}\\ &1&&bu_2 \varpi^{-2c(\chi)} \\ &&1& au_1 \varpi^{-c(\chi)} \\ &&&1 \end{bmatrix} ) v\,dr_1\, dr_2 \,dz\, d a \, d b  \\
\end{align*}
with 
$$
u_1 = 1-r_1a^{-1}\varpi^{c(\chi)-1} \quad \text{and}\quad u_2 = 1+b^{-1}r_2\varpi^{2c(\chi)-1}.
$$
Assume first $c(\chi)=1$. Then this integral  is:
\begin{align*}
& \int\limits_{\OF^\times} \int\limits_{\OF^\times}  \int\limits_{\OF} \int\limits_{\OF} \chi(ab) 
\pi( 
\begin{bmatrix} 1 & -(a-r_1)\varpi^{-c(\chi)} & bu_2 \varpi^{-2c(\chi)} & z\varpi^{-N}\\ &1&&bu_2 \varpi^{-2c(\chi)} \\ &&1& (a-r_1) \varpi^{-c(\chi)} \\ &&&1 \end{bmatrix} ) v\,dr_1\, dr_2 \,dz\, d a \, d b  \\
& \int\limits_{\OF^\times} \int\limits_{\OF^\times}  \int\limits_{\OF} \int\limits_{\OF} \chi(ab) 
\pi( 
\begin{bmatrix} 1 & r_1\varpi^{-c(\chi)} & bu_2 \varpi^{-2c(\chi)} & z\varpi^{-N}\\ &1&&bu_2 \varpi^{-2c(\chi)} \\ &&1& -r_1 \varpi^{-c(\chi)} \\ &&&1 \end{bmatrix} ) v\,dr_1\, dr_2 \,dz\, d a \, d b  \\
&=0.
\end{align*}
Assume that $c(\chi)>1$. Changing variables in $r_1$ and then in $a$, this integral is:
\begin{align*}
&\begin{multlined}[t] \int\limits_{\OF^\times} \int\limits_{\OF^\times}  \int\limits_{\OF} \int\limits_{\OF} \chi(ab) \\
\pi( 
\begin{bmatrix} 1 & -a(1+r_1\varpi^{c(\chi)-1}) \varpi^{-c(\chi)} & bu_2 \varpi^{-2c(\chi)} & z\varpi^{-N}\\ &1&&bu_2 \varpi^{-2c(\chi)} \\ &&1& a(1+r_1\varpi^{c(\chi)-1})  \varpi^{-c(\chi)} \\ &&&1 \end{bmatrix} ) v\,dr_1\, dr_2 \,dz\, d a \, d b \end{multlined} \\
&=\begin{multlined}[t]  \int\limits_{\OF^\times} \int\limits_{\OF^\times}  \int\limits_{\OF} \int\limits_{\OF} \chi(a(1+r_1\varpi^{c(\chi)-1})b) 
\pi( 
\begin{bmatrix} 1 & -a \varpi^{-c(\chi)} & bu_2 \varpi^{-2c(\chi)} & z\varpi^{-N}\\ &1&&bu_2 \varpi^{-2c(\chi)} \\ &&1& a  \varpi^{-c(\chi)} \\ &&&1 \end{bmatrix} ) v\,dr_1\, dr_2 \,dz\, d a \, d b \end{multlined} \\
&=0. 
\end{align*}
This proves that the first summand of \eqref{etaTchieq} is zero, as claimed. We now have:
\begin{multline*}
T_\chi(v)
 = 
 \pi(t_N)  \int\limits_{\OF}\int\limits_{\OF} \int\limits_{\OF} \pi(
\begin{bmatrix} 1&&& \\ r_2\varpi^{N-1} &1&& \\ -r_1\varpi^{N-1} &&1& \\ & -r_1\varpi^{N-1} & -r_2 \varpi^{N-1} & 1 \end{bmatrix} \\
\begin{bmatrix} 1&&& z \varpi^{-N+1} \\ &1&& \\ &&1& \\ &&&1 \end{bmatrix} ) T^{\mathrm{Kl}}_\chi (v)\, dz\, dr_1\, dr_2.
\end{multline*}
Applying $\pi(t_N)^{-1}$ to both sides and using the invariance of $T_\chi(v)$ under $t_N \in \K{N}$ from i), we see that $T_\chi(v)$ is: 
\begin{align*}
&\int\limits_{\OF}\int\limits_{\OF} \int\limits_{\OF} \pi(
\begin{bmatrix} 1&&& \\ r_2\varpi^{N-1} &1&& \\ -r_1\varpi^{N-1} &&1& \\ & -r_1\varpi^{N-1} & -r_2 \varpi^{N-1} & 1 \end{bmatrix} 
\begin{bmatrix} 1&&& z \varpi^{-N+1} \\ &1&& \\ &&1& \\ &&&1 \end{bmatrix} ) T^{\mathrm{Kl}}_\chi (v)\, dz\, dr_1\, dr_2\\
& = \int\limits_{\OF}\int\limits_{\OF} \int\limits_{\OF} \pi(
\begin{bmatrix} 1&&& z \varpi^{-N+1} \\ &1&& \\ &&1& \\ &&&1 \end{bmatrix} 
\begin{bmatrix} 1&r_1z&r_2z& \\ r_2\varpi^{N-1} &1&&r_2z \\ -r_1\varpi^{N-1} &&1&-r_1z \\ & -r_1\varpi^{N-1} & -r_2 \varpi^{N-1} & 1 \end{bmatrix} 
) T^{\mathrm{Kl}}_\chi (v)\, dz\, dr_1\, dr_2 \\
& = \int\limits_{\OF}\pi(
\begin{bmatrix} 1&&& z \varpi^{-N+1} \\ &1&& \\ &&1& \\ &&&1 \end{bmatrix} 
) T^{\mathrm{Kl}}_\chi (v)\, dz
\end{align*}
where we have used the invariance properties of $T^{\mathrm{Kl}}_\chi (v)$ from Lemma \ref{klingentranslemma}. By assumption, $T_\chi(v)$ is invariant under the elements of the form \eqref{r1r2eq}; integrating again over these elements we have
$$
T_\chi(v) =\int\limits_{\OF}\int\limits_{\OF} \int\limits_{\OF}\pi(
             \begin{bmatrix} 1 &r_1 \varpi^{-1} & r_2 \varpi^{-1} & z \varpi^{-N+1} \\ &1&&r_2\varpi^{-1} \\ &&1& -r_1 \varpi^{-1} \\ &&& 1 \end{bmatrix}
) T^{\mathrm{Kl}}_\chi (v)\, dr_1\, dr_2\, dz.
$$
This integral is zero by an argument analogous to the one above proving that the first term of \eqref{etaTchieq} is zero. The proof  is complete. 
\end{proof}

\begin{theorem}
\label{maintheorem}
Let $(\pi,V)$ be a smooth representation of $\GSp(4,F)$ for which the center of $\GSp(4,F)$ acts trivially, and let $\chi$ be a quadratic character of $F^\times$ with conductor $c(\chi) >0$.  Let $n$ be a non-negative integer and define 
$
N=  \max(n+2c(\chi),4c(\chi)).
$
 If $v \in V (n)$, then $T_\chi(v) \in V(N,\chi)$. 
Moreover, assume that $\pi$ is generic, irreducible and admissible with Whittaker model $\mathcal{W}(\pi,\psi_{c_1,c_2})$ where $c_1,c_2 \in \OF^\times$. If $W \in  V (n)$, then the $\chi$-twisted zeta integral \eqref{twistedzetaeq} of $T_\chi(W)$ is
\begin{equation}
\label{tchizetaeq}
Z(s, T_\chi(W),\chi) = (q-1)  q^{c(\chi)} \chi(c_2)G(\chi,-c(\chi))^3 W(1 ).
\end{equation}
For $n \geq N_\pi$, the image of $T_\chi: V(n) \to V(N,\chi)$ is spanned by the non-zero vector $T_\chi( \theta'{}^{n-N_\pi} W_\pi)$, 
where $W_\pi$ is a newform for $\pi$. 
\end{theorem}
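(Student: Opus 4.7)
First I would dispatch part (i): since $\Kl{n}\subset\K{n}$, we have $V(n)\subset V_{\mathrm{Kl}}(n)$, and Lemma~\ref{mainlemma}(i) applied to $v\in V(n)$ immediately gives $T_\chi(v)\in V(N,\chi)$. For the zeta integral in part (ii), decompose $T_\chi(W)=qA+\pi(t_N)B$ via~\eqref{twisteq}, where $A$ and $B$ are the $\OF$-integrals of translates of $T_\chi^{\mathrm{Kl}}(W)$ appearing there. By linearity, $Z(s,T_\chi(W),\chi)=qZ(s,A,\chi)+Z(s,\pi(t_N)B,\chi)$, and I would treat the two pieces separately.

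For $Z(s,A,\chi)$: in the Whittaker model, the element $\bigl[\begin{smallmatrix}1&&&z\varpi^{-N}\\&1&&\\&&1&\\&&&1\end{smallmatrix}\bigr]$ commutes with $\bigl[\begin{smallmatrix}1&&&\\&1&&\\&z'&1&\\&&&1\end{smallmatrix}\bigr]$ (orthogonal roots), and its conjugate past $\diag(t,t,1,1)$ remains in the $(1,4)$-root subgroup. Since $\psi_{c_1,c_2}$ is trivial on this subgroup (the highest-root coordinate does not appear in the character), the inner $z$-integration collapses, yielding $Z(s,A,\chi)=Z(s,T_\chi^{\mathrm{Kl}}(W),\chi)$. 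Multiplying by $q$ and invoking Lemma~\ref{zetalemma} produces~\eqref{tchizetaeq}.

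The more delicate step is showing $Z(s,\pi(t_N)B,\chi)=0$. For this I would use the Bruhat-type factorization
\[
\diag(t,t,1,1)\cdot\bigl[\begin{smallmatrix}1&&&\\&1&&\\&z'&1&\\&&&1\end{smallmatrix}\bigr]\cdot t_N \;=\; s_0\cdot\diag(\varpi^N,t,1,t\varpi^{-N})\cdot\bigl[\begin{smallmatrix}1&&&\\&1&&\\&z'&1&\\&&&1\end{smallmatrix}\bigr],
\]
where $s_0=\bigl[\begin{smallmatrix}&&&-1\\&1&&\\&&1&\\1&&&\end{smallmatrix}\bigr]$ is the long Weyl element, substitute the defining integrals~\eqref{TKLdefeq} and~\eqref{vprimedefalteq} of $T_\chi^{\mathrm{Kl}}(W)$, and re-order the iterated integration to expose an inner Gauss sum over $\OF^\times$ whose additive character has argument of valuation other than $-c(\chi)$. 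Lemma~\ref{gausslemma} then forces vanishing. This parallels the treatment of the second half of the zeta integral in the proof of Lemma~\ref{zetalemma}.

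For part (iii), standard properties of the level-raising operator $\theta'$ give $(\theta'^{n-N_\pi}W_\pi)(1)\neq 0$, so~\eqref{tchizetaeq} yields $T_\chi(\theta'^{n-N_\pi}W_\pi)\neq 0$. To see that the image is one-dimensional, use the spanning $V(n)=\sum\C\cdot\theta'^i\theta^j\eta^k W_\pi$ with $i+j+2k=n-N_\pi$ and prove $T_\chi(\theta'^i\theta^j\eta^kW_\pi)=0$ whenever $j+k\geq 1$. A direct matrix computation shows that conjugating any element of~\eqref{r1r2eq} by $\eta^{-1}$ lies in $\K{n-2}$, so any $v\in\pi(\eta)V(n-2)$ is already fixed by~\eqref{r1r2eq}, with an analogous (though more elaborate) statement in the $\theta$ case. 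I would transfer this extra invariance of $v$ through the defining integrals of $v^\chi$, $T_\chi^{\mathrm{Kl}}$, and $T_\chi$ to conclude that $T_\chi(v)$ is itself invariant under~\eqref{r1r2eq}, whereupon Lemma~\ref{mainlemma}(ii) forces $T_\chi(v)=0$. The main obstacles are isolating the vanishing Gauss sum in $Z(s,\pi(t_N)B,\chi)=0$ and transferring the extra invariance from $v$ to $T_\chi(v)$ in the image argument, particularly in the $\theta$ case.
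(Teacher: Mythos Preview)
Your treatment of part (i) and the first summand in part (ii) matches the paper. For the second summand, the paper avoids any Bruhat or Gauss-sum argument: it shows the integrand vanishes pointwise by inserting $\left[\begin{smallmatrix}1&x\varpi^{-1}&&\\&1&&\\&&1&-x\varpi^{-1}\\&&&1\end{smallmatrix}\right]$ on the left (picking up $\psi(c_1x\varpi^{-1})$ via the Whittaker property), conjugating this element past $\diag(t,t,1,1)\,t_N$ into the $\p^{N-1}$ lower-unipotent subgroup where the extra invariance of Lemma~\ref{klingentranslemma} applies, and concluding from $\psi(\p^{-1})\neq 1$. This is considerably shorter than isolating a Gauss sum after a Bruhat factorization.

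The genuine gap is in part (iii). Your plan to transfer the invariance of $v$ under the elements~\eqref{r1r2eq} through the definition of $T_\chi$ does not work as stated: the group elements appearing in $T_\chi$ (in particular $\tau^{c(\chi)}$ and the upper-triangular factors in~\eqref{vprimedefalteq}) conjugate~\eqref{r1r2eq} outside any group under which $v$ is known to be invariant---for instance, conjugating by $\tau^{-c(\chi)}$ sends the $(1,2)$ entry into $\p^{-1-c(\chi)}$. There is no evident averaging that repairs this, and the $\theta$ case is worse since $\theta$ is not translation by a single element. The paper's route is entirely different and exploits the zeta formula~\eqref{tchizetaeq} you have already established: for $W=\theta'{}^i\theta^j\eta^kW_\pi$ with $j>0$ or $k>0$ one has $W(1)=0$ (by the definitions of $\theta,\eta$ and Lemma~4.1.2 of~\cite{RS}), hence $Z(s,T_\chi(W),\chi)=0$. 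The $\eta$ principle (Theorem~4.3.7 of~\cite{RS}) then forces $T_\chi(W)=\eta W'$ for some $W'\in V(N-2,\chi)$, which \emph{automatically} yields the invariance under~\eqref{r1r2eq}, and Lemma~\ref{mainlemma}(ii) finishes. The missing idea is that the already-computed zeta integral, combined with the $\eta$ principle from~\cite{RS}, supplies the needed invariance of $T_\chi(W)$ for free; no direct transfer from $v$ is required.
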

\begin{proof}
The first assertion was proven in i) of Lemma \ref{mainlemma}. 
Assume now that $\pi$ is generic and irreducible. We work in the Whittaker model $ \mathcal{W}(\pi,\psi_{c_1,c_2})$ with $c_1,c_2 \in \OF^\times$. 
By Lemma 4.1.1 of \cite{RS} we have
$$
Z(s,T_\chi(v), \chi) =
\int\limits_{F^\times} T_\chi(v) (\begin{bmatrix} t&&& \\ &t&& \\ &&1& \\ &&&1 \end{bmatrix} ) |t|^{s-3/2} \chi(t) \, d^\times t.
$$
By the definition of $T_\chi(v)$, this is
\begin{multline*}
q \int\limits_{F^\times}  \int\limits_{\OF}  T^{\mathrm{Kl}}_\chi (v) 
(\begin{bmatrix} t&&& \\ &t&& \\ &&1& \\ &&&1 \end{bmatrix} 
 \begin{bmatrix} 1&&& z \varpi^{-N} \\ &1&& \\ &&1& \\ &&&1 \end{bmatrix} )|t|^{s-3/2} \chi(t) \, dz   \, d^\times t \\
+
\int\limits_{F^\times} \int\limits_{\OF} 
T^{\mathrm{Kl}}_\chi (v)(\begin{bmatrix} t&&& \\ &t&& \\ &&1& \\ &&&1 \end{bmatrix}  
t_N\begin{bmatrix} 1&&& z \varpi^{-N+1} \\ &1&& \\ &&1& \\ &&&1 \end{bmatrix}  )|t|^{s-3/2} \chi(t) \, dz\, d^\times t. 
\end{multline*}
We assert that the second summand is zero; it will suffice to prove that the integrand is zero. Let $t \in F^\times$ and $z \in \OF$. Let
$x \in \OF$. Then
\begin{align*}
&\psi(c_1x \varpi^{-1}) T^{\mathrm{Kl}}_\chi (v)(\begin{bmatrix} t&&& \\ &t&& \\ &&1& \\ &&&1 \end{bmatrix}  
t_N\begin{bmatrix} 1&&& z \varpi^{-N+1} \\ &1&& \\ &&1& \\ &&&1 \end{bmatrix}  ) \\
&= T^{\mathrm{Kl}}_\chi (v)( \begin{bmatrix} 1&x\varpi^{-1} && \\ &1&& \\ &&1&-x \varpi^{-1} \\ &&& 1 \end{bmatrix} 
\begin{bmatrix} t&&& \\ &t&& \\ &&1& \\ &&&1 \end{bmatrix}  
t_N\begin{bmatrix} 1&&& z \varpi^{-N+1} \\ &1&& \\ &&1& \\ &&&1 \end{bmatrix}  ) \\
&= \begin{multlined}[t] T^{\mathrm{Kl}}_\chi (v)( 
\begin{bmatrix} t&&& \\ &t&& \\ &&1& \\ &&&1 \end{bmatrix}  
t_N \begin{bmatrix} 1&&& z \varpi^{-N+1} \\ &1&& \\ &&1& \\ &&&1 \end{bmatrix} \\
\begin{bmatrix} 1&&& \\ &1&& \\ -x\varpi^{N-1}&&1& \\ &-x\varpi^{N-1}&& 1 \end{bmatrix} 
\begin{bmatrix} 1& xz && \\ &1&& \\ & x^2z\varpi^{N-1} &1 & -xz \\ &&&1 \end{bmatrix} )\end{multlined} \\
&= T^{\mathrm{Kl}}_\chi (v)( 
\begin{bmatrix} t&&& \\ &t&& \\ &&1& \\ &&&1 \end{bmatrix}  
t_N \begin{bmatrix} 1&&& z \varpi^{-N+1} \\ &1&& \\ &&1& \\ &&&1 \end{bmatrix}),
\end{align*}
where the last equality follows from the invariance properties of Lemma \ref{klingentranslemma}. Since $\psi(\p^{-1}) \neq 1$, this implies that the integrand is zero. The first summand is
\begin{align*}
&q \int\limits_{F^\times}  \int\limits_{\OF}  T^{\mathrm{Kl}}_\chi (v) 
(\begin{bmatrix} t&&& \\ &t&& \\ &&1& \\ &&&1 \end{bmatrix} 
 \begin{bmatrix} 1&&& z \varpi^{-N} \\ &1&& \\ &&1& \\ &&&1 \end{bmatrix} )|t|^{s-3/2} \chi(t) \, dz   \, d^\times t \\
&=q \int\limits_{F^\times}  \int\limits_{\OF}  T^{\mathrm{Kl}}_\chi (v) 
(\begin{bmatrix} t&&& \\ &t&& \\ &&1& \\ &&&1 \end{bmatrix} )|t|^{s-3/2} \chi(t) \, dz   \, d^\times t \\
&=q \int\limits_{F^\times}   T^{\mathrm{Kl}}_\chi (v) 
(\begin{bmatrix} t&&& \\ &t&& \\ &&1& \\ &&&1 \end{bmatrix} )|t|^{s-3/2} \chi(t)   \, d^\times t \\
&=qZ(s,T^{\mathrm{Kl}}_\chi (v), \chi).
\end{align*}
The formula \eqref{tchizetaeq} follows now from \eqref{tklzetaeq}. To prove the final assertion, we note first by Theorem 7.5.7 of \cite{RS} that the space $V(n)$ is spanned by the vectors $\theta'{}^i \theta^j \eta^k W_\pi$ with $i+j+2k=n-N_\pi$. The formula (3.7) of \cite{RS} implies that 
\begin{align*}
Z(s, T_\chi(\theta'{}^{n-N_\pi} W_\pi),\chi) &=  (q-1)  q^{c(\chi)} \chi(c_2)G(\chi,-c(\chi))^3 (\theta'{}^{n-N_\pi}W_\pi)(1 ) \\
&= (q-1)  q^{c(\chi)+n-N_\pi} \chi(c_2)G(\chi,-c(\chi))^3 W_\pi(1),
\end{align*}
and this is non-zero. To complete the proof, it will suffice to prove that $T_\chi(\theta'{}^i \theta^j \eta^k W_\pi)=0$ if $j>0$ or $k>0$. Let $W=\theta'{}^i \theta^j \eta^k W_\pi$ with $j>0$ or $k>0$. The $\chi$-twisted zeta integral of $W$ is a constant times $(\theta'{}^i \theta^j \eta^k W_\pi)(1)$; this quantity is zero by the definitions of $\eta$, $\theta$, and Lemma 4.1.2 of \cite{RS}. Since $Z(s,T_\chi(W),\chi)=0$,  by Theorem 4.3.7 of \cite{RS} there exists $W' \in V(N-2,\chi)$ such that $T_\chi(W) = \eta W'$. This implies that $T_\chi(W)$ is invariant under the elements in \eqref{r1r2eq}. Therefore, by ii) of Lemma \ref{mainlemma},  $T_\chi(W)=0$. 
\end{proof}

\end{document}